\documentclass[reqno, a4paper, 11pt]{article}
\usepackage{amsmath, amssymb, amsthm, enumitem, mathrsfs}
\usepackage{placeins}

\pdfoutput=1
\usepackage{color}

\usepackage{natbib}

\usepackage{hyperref}
\usepackage{txfonts}

\usepackage{newtxtext}

\usepackage{tikz}
\usetikzlibrary{external}
\tikzexternalize
\usetikzlibrary{backgrounds,patterns,decorations.pathreplacing,decorations.pathmorphing}
\usepackage{caption}
\usepackage{accents}



\newtheorem{theorem}{Theorem}

\newtheorem{setup}[theorem]{Setup}
\newtheorem{corollary}[theorem]{Corollary}

\newtheorem{lemma}[theorem]{Lemma}
\newtheorem{proposition}[theorem]{Proposition}

{Important Convention}

\theoremstyle{remark}
\newtheorem{remark}[theorem]{Remark}
\newtheorem{example}[theorem]{Example}


  \newcommand{\F}{\mathcal{F}}

 \newcommand{\X}{\mathcal{X}}

 \renewcommand{\phi}{\varphi}



\newcommand{\E}{\mathbb{E}}

\renewcommand{\P}{\mathbb{P}}
\newcommand{\N}{\mathbb{N}}
\newcommand{\Q}{\mathbb{Q}}
\newcommand{\R}{\mathbb{R}}




\newcommand{\bes}{\begin{subequations}}
\newcommand{\ees}{\end{subequations}}
\newcommand{\eea}{\end{eqnarray}}

\newcommand{\EE}{{\mathbb E}}

\newcommand{\QQ}{{\mathbb Q}}

\usepackage{bbm}

\renewcommand{\epsilon}{\varepsilon}

\DeclareMathOperator{\proj}{proj}

\newcommand{\fourIdx}[5]{%
\setbox1=\hbox{\ensuremath{^{#1}}}%
 \setbox2=\hbox{\ensuremath{_{#2}}}%
 \setbox5=\hbox{\ensuremath{#5}}%
 \hspace{\ifnum\wd1>\wd2\wd1\else\wd2\fi}%
 \ensuremath{\copy5^{\hspace{-\wd1}\hspace{-\wd5}#1\hspace{\wd5}#3}%
 _{\hspace{-\wd2}\hspace{-\wd5}#2\hspace{\wd5}#4}%
 }}

\numberwithin{equation}{section}
\numberwithin{theorem}{section}

\renewcommand{\subset}{\subseteq}



\usepackage{subcaption}
\usepackage{graphicx}

\renewcommand{\mathrm}{}

\makeatletter
\newcommand{\mylabel}[2]{#2\def\@currentlabel{#2}\label{#1}}
\makeatother

\newcommand{\WA}{\mathcal W}

\newcommand{\Y}{\mathcal Y}
\newcommand{\Pc}{\mathcal P}

\usepackage{relsize}
\def\fcmp{\mathbin{\raise 0.6ex\hbox{\oalign{\hfil$\scriptscriptstyle \mathrm{o}$\hfil\cr\hfil$\scriptscriptstyle\mathrm{9}$\hfil}}}}

\newcommand{\Law}{\mathscr L}

\newcommand{\tint}{{\textstyle \int}}

\oddsidemargin 0.3cm \evensidemargin 0.3cm
\textwidth 15cm

\begin{document}

\title
{Weak Transport for Non-Convex Costs and\\ Model-independence in a Fixed-Income Market}
\author{B.\ Acciaio\thanks{Corresponding author. ETH Zurich, \emph{beatrice.acciaio@math.ethz.ch}} \and M.\ Beiglb\"ock
\thanks{University of Vienna} \and G. Pammer\thanks{ETH Zurich}}

\maketitle

\begin{abstract}
We consider a model-independent pricing problem in a fixed-income market and show that it leads to a weak optimal transport problem as introduced by Gozlan et al.
We use this to characterize the extremal models for the pricing of caplets on the spot rate and to establish a first robust super-replication result that is applicable to fixed-income markets. 

Notably, the weak transport problem exhibits a cost function which is non-convex and thus not covered by the standard assumptions of the theory.
In an independent section, we establish that weak transport problems for general costs can be reduced to equivalent problems that do satisfy the convexity assumption, extending the scope of weak transport theory. 
This part could be of its own interest independent of our financial application, and is accessible to readers who are not familiar with mathematical finance  notation.

\medskip

\noindent\emph{keywords:}  fixed-income markets, robust pricing and hedging, weak transport problem
\end{abstract}

\section{Introduction}

Prompted by Hobson's seminal paper \citep{Ho98a}, the field  of model-independent  finance has seen a steep  development. 
Typically the framework consists in a market where some assets are dynamically traded, while some derivatives are statically traded at time zero.  
Then, without imposing any model or underlying probability measure, one seeks robust pricing bounds for other derivatives. Usually the payoffs are already expressed in discounted terms, and the pricing problem corresponds to looking for market-compatible martingale measures, that is, probability measures on the path space such that the discounted asset prices - taken to be the canonical processes - are martingales, and  reproduce the observed marked prices.

A widely used  assumption is the availability, for static trading, of call options with maturity $T$ written on an asset $S$, for all strikes $K$, which determines the distribution of $S_T$ under  the pricing measure.  However, this reasoning crucially relies on the existence of a deterministic
num\'eraire for discounting. This point is crucial for the use of tools from the Optimal Transport and Skorokhod embedding, see  \cite{Ho11}, \cite{Ob04}, \cite{BeHePe12}, \cite{GaHeTo13}, \cite{BeJu16}, \cite{DoSo12}, \cite{CaLaMa14}, \cite{BeCoHu14}, \cite{CoObTo19}, \cite{cheridito2021martingale}, among many others.

In the present paper, we do not assume the existence of a deterministic bank account, or of a bank account at all. Instead, we consider a fixed-income market, where  bonds are dynamically traded, and some options on them are statically traded. 
 A similar setting is considered
by \cite{AG20}, who assume finitely many call options with possibly different maturities being traded on a bond, and investigate market consistency with absence of arbitrage. This analysis follows the spirit of  \cite{DH07}, and displays in a simple market the different characteristics of a robust framework when stochastic discounting is allowed.
On the other hand,  the focus of the present paper is on robust pricing.
We will use bonds as num\'eraires,
thus the pricing measures will be the so-called forward measures, a notion that we recall in the next paragraph.

For $T>0$, we refer to a (zero coupon) bond with maturity $T$ as a $T$-bond, and denote its price at time $t\leq T$ by $p(t,T)$.
A $T$-forward measure $\QQ_T$ is a probability measure on $\F_T$ (where $(\Omega, \F, (\F_t)_{t\leq T})$ is some abstract filtered space) such that every traded asset expressed in units of  the $T$-bond is a martingale. The pricing formula, at time $s$, for a claim $\Phi$ with maturity $t$, for $s\leq t\leq T$, is then given by
\begin{equation}\label{eq.pr.fwd}
\tfrac{V_s(\Phi)}{p(s,T)}=\EE^{\QQ_T}\left[\tfrac{\Phi}{p(t,T)}\Big|\F_s\right].
\end{equation}
In particular, having the prices at time $0$ of all call options with maturity $T$ on a given asset, identifies the distribution of that asset at time $T$ under the $T$-forward measure.

\subsection{Robust pricing setting}
\label{sect.intro_pr}

\begin{setup}\label{Tradedcalls} We consider maturities $T_0=0<T_1<T_2<T_3$, and let $T_2$- and $T_3$-bonds be (dynamically) traded in the market.\footnote{All our results are still valid if $T_1$-bonds are also dynamically traded.}
We also let call options with maturity $T_1$ on the $T_2$-bond, and call options with maturity $T_2$ on the $T_3$-bond be (statically) traded at time zero, for every strike $K$, and denote the respective prices by $C(T_1,T_2,K)$ and $C(T_2,T_3,K)$. 
\end{setup}
A main contribution of this paper is to provide a 
robust superreplication theorem that applies to general derivatives written on $p(T_1, T_2), p(T_1, T_3)$, i.e.\ we will consider payoffs of the form 
\begin{align}\label{Ex.payoff}
\Phi(p(T_1, T_2), p(T_1, T_3)).
\end{align}
\begin{example}
The following example fits the above setup well.
Let us recall that, for $0\leq S<T$, and a simple spot rate $F(S,T)$ prevailing at $S$, a caplet with reset date $S$, settlement date $T$, and strike rate $K$, yields the following cash-flow at time $T$:
\[
(T-S)(F(S,T)-K)^+.
\]
Note that this corresponds to the following cash-flow at time $S$:
\[
(1+(T-S)K)\left(\frac{1}{1+(T-S)K}-p(S,T)\right)^+,
\]
that is, to holding $1+(T-S)K$ puts with maturity $S$ and strike $1/(1+(T-S)K)$ written on the $T$-bond; see \cite[Section~2.6.1]{Fi_book}.

We now invoke the fact that $6$ month caplets (i.e. with $T-S=6$ months) are much more liquid than the $12$ month ones. As common in robust finance literature, we stretch this fact and actually assume that $6$ month caplets are liquidly traded for every strike rate.
In particular, for $T_{i+1}-T_i=6$ months, $i=1,2$,
this means availability of all caplets with reset date $T_i$ and settlement date $T_{i+1}$, for $i=1,2$. 
From the above, this corresponds to knowing the prices of puts with maturity $T_i$ written on the $T_{i+1}$-bond for every strike, for $i=1,2$.
This is captured by Setup~\ref{Tradedcalls}, and the analysis of the current paper allows to obtain lower and upper pricing  bounds for options of the form \eqref{Ex.payoff}, thus in particular of the (less liquid) $12$ month caplets with reset date $T_1$ and settlement date $T_3$, i.e., $\Phi=(1+(T_3-T_1)K)\left(\frac{1}{1+(T_3-T_1)K}-p(T_1,T_3)\right)^+$. In fact, in this case we can also identify the extremal models, as well as the corresponding sub- and superreplication strategies. (see Section~\ref{sect.caplets}). 
\end{example}

\subsection{Robust superreplication theorem}\label{sect.intro_super}
In our setup it is most convenient to take the $T_2$-bond as a num\'eraire. In particular,  we consider the \emph{discounted} processes
\begin{align}\label{eq.disc.intro}
X_i:= \tfrac{p(T_i, T_1)}{p(T_i, T_2)}, i=0,1, \quad 
Y_i:= \tfrac{p(T_i, T_3)}{p(T_i, T_2)}, i=0,1,2.
\end{align}
For the convenience of the reader, we detail in Section~\ref{sect.disc} below how to switch between undiscounted quantities and quantities expressed in $T_2$-bond units. 
In the present introductory section we will consistently refer to quantities expressed in $T_2$-bonds.
 The assumption in Setup~\ref{Tradedcalls} then asserts that options on $X_1$ and $Y_2$ are liquidly traded. In the spirit of \cite{BL78}, this amounts to the identification of probabilities $\mu, \nu$ on $\R_+$ with finite first moments, such that for derivatives $\phi$, $\psi$
\begin{align}\label{DualCosts}
\text{price}(\phi(X_1))= \tint \phi\, d\mu, \quad 
\text{price}(\psi(Y_2))= \tint \psi\, d\nu.
\end{align}
 In view of the classical (in the sense of \emph{non-robust}) theory of fixed-income markets, we expect  pricing functionals to arise from $T_2$-forward measures. 
  A generic $T_2$-forward measure is compatible with the information given by the market if 
  (compare \eqref{eq.pr.fwd})  
\begin{align}\label{eq.mk.pr}
\text{law}_{\QQ_{T_2}}(X_1) \sim \mu, \quad \text{law}_{\QQ_{T_2}}(Y_2) \sim \nu.
\end{align}
We write $\mathcal Q(\mu, \nu)$ for the class of all setups $(\Omega, \F, (\F_{T_i})_{i=0}^2, \QQ_{T_2}), p(T_i, T_j), i\leq j \leq 3, i\leq 2$, satisfying the \emph{marginal constraints} \eqref{eq.mk.pr} and 
the \emph{martingale constraints}  
\begin{align}
(X_i)_{i=0,1},\quad  (Y_i)_{i=0,1,2} \quad \text{ are $(\F_{T_i})$-martingales}. 
\end{align} 
(Note that we make no further assumptions on the underlying stochastic basis.)
We will slightly abuse notation in that we write ${\QQ_{T_2}}\in\mathcal Q(\mu, \nu)$ when we really mean that we run over all such setups.  
Writing  $\hat \Phi(X_1, Y_2)$ for the payoff \eqref{Ex.payoff} in terms of $T_2$-bonds (see Section~\ref{sect.disc}), we arrive at the primal optimization problem 
\begin{align}\label{eq.rob}
\hat P_F  :=
\sup_{\QQ_{T_2}\in\mathcal{Q}(\mu,\nu)}
\EE^{\QQ_{T_2}}[\hat\Phi(X_1, Y_1)].
\end{align}
We stress that problem \eqref{eq.rob} differs significantly from a usual martingale optimal transport problem in that our objective functional is written on $Y_1$, while the marginal constraint concerns $Y_2$. We also note that, in contrast to most problems considered in robust finance, it is delicate to pass from \eqref{eq.rob} to a problem on a canonical setup, cf.\ Proposition~\ref{WTFTheorem}. 

Switching to the dual problem of \eqref{eq.rob}, one is allowed to trade statically in vanilla payoffs $\phi, \psi$ written on $X_1$, $Y_2$ and to trade dynamically in $(Y_i)_{i=1,2}$. Therefore we consider superhedges of the form\footnote{In principle we could allow for the hedge $\Delta$ to depend also on the value of $X_1$, but as part of our results we will obtain that this does not change the actual value of the superhedging problem.}
\begin{align}\label{Rob.sup.hed} 
\hat\Phi(X_1, Y_1) \leq \phi(X_1) +  \psi(Y_2) + \Delta(Y_1) (Y_2-Y_1). 
\end{align}
Since our goal is to obtain a robust superhedge, we shall require that \eqref{Rob.sup.hed}  holds for arbitrary $(X_1, Y_1, Y_2) = (x_1, y_1, y_2) \in \R^3_+$.  According to \eqref{DualCosts}, the costs  for this strategy (in discounted terms) amount to 
$\tint \phi\,  d\mu + \tint \psi\,  d\nu.$ 
Our main superreplication theorem is then:
\begin{theorem}\label{SuperRepIntro}
Assume that $\mu, \nu$ are probabilities on $\R_+$ with finite $r$-th moments, for some $r \geq 1$. Assume that $\hat \Phi(x,y)$ is upper semicontinuous and bounded from above by a multiple of $|x|^r + |y|^r$.
Then we have
\begin{align*}
\hat P_F= \inf\left\{\tint\phi\, d\mu+ \tint \psi \, d\nu: \hat\Phi(x_1,y_1) \leq \phi(x_1) +  \psi(y_2) + 
\Delta(y_1)(y_2-y_1)\, \mbox{ for all } x_1,y_1, y_2 \geq 0\right\},
\end{align*}
where the infimum is taken over continuous functions $\phi$ and $\psi$ that are bounded in absolute terms by a multiple of $x\mapsto (1+|x|)^r$ and $\Delta$ is an arbitrary function\footnote{In fact it suffices to consider functions $\Delta$ which are increasing.}.
\end{theorem}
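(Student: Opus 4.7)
The plan is to reformulate $\hat P_F$ as a weak optimal transport problem (in the sense of Gozlan et al.) and then translate its dual into the superhedging form of the theorem. First, I would disintegrate any $\QQ_{T_2}\in\mathcal Q(\mu,\nu)$ along $X_1$. Conditional on $X_1=x$, the joint law of $(Y_1,Y_2)$ is a martingale coupling; by Strassen, its marginals $\lambda^x$ and $\sigma^x$ satisfy $\lambda^x\leqc\sigma^x$, and conversely any such pair extends to a martingale coupling. Since $\hat\Phi$ depends only on $(X_1,Y_1)$, one may optimize $\lambda^x$ independently for each $x$, yielding
\[
\hat P_F \;=\; \sup_{\sigma\in\cpl(\mu,\nu)}\tint C(x,\sigma^x)\,\mu(dx),\qquad C(x,p):=\sup\Bigl\{\tint\hat\Phi(x,y)\,\lambda(dy):\lambda\leqc p\Bigr\},
\]
a weak transport problem with cost $C$.

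Next, I would observe that $C(x,\cdot)$ is concave (and in particular non-convex) in $p$, using the linearity of the inner integral together with the joint convexity of $\{(\lambda,p):\lambda\leqc p\}$. The standard Gozlan et al.\ duality does not apply directly; instead one invokes the paper's own reduction of non-convex weak-transport costs to equivalent convex ones, producing
\[
\hat P_F \;=\; \inf_{\phi,\psi}\Bigl\{\tint\phi\,d\mu+\tint\psi\,d\nu : \phi(x)+\tint\psi\,dp\geq C(x,p)\ \forall x,p\Bigr\}.
\]
One further argues that the infimum is attained with $\psi$ convex, since the convex-order constraint $\lambda\leqc p$ only interacts with $\psi$ through convex integrands; for such $\psi$, $\inf\{\tint\psi\,dp:\lambda\leqc p\}=\tint\psi\,d\lambda$, and the dual constraint collapses to the pointwise inequality $\hat\Phi(x,y)\leq \phi(x)+\psi(y)$.

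To recover the superhedging form, for each $y_1\geq 0$ I would pick $\Delta(y_1)$ such that the affine minorant of $\psi$ at $y_1$ with slope $-\Delta(y_1)$ is tight. The subgradient inequality then yields $\psi(y_1)\leq\psi(y_2)+\Delta(y_1)(y_2-y_1)$ for all $y_2\geq 0$, and combining with $\hat\Phi(x,y_1)\leq \phi(x)+\psi(y_1)$ gives the theorem's pointwise dual inequality. The monotonicity of the subdifferential of the convex $\psi$ explains the footnote's remark that $\Delta$ may be taken monotone. Weak duality in the other direction is immediate: integrate the pointwise superhedge against any $\QQ_{T_2}\in\mathcal Q(\mu,\nu)$ and use $\EE[\Delta(Y_1)(Y_2-Y_1)]=0$ from the martingale property of $Y$.

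The main technical obstacle lies in applying the paper's non-convex weak-transport duality in the second step: one must verify that $C$ inherits upper semicontinuity and $r$-growth from $\hat\Phi$, handle the typically non-attained inner Strassen-type supremum in the definition of $C$, and check that the reduction to an equivalent convex-in-$p$ cost preserves the value of the transport problem under the polynomial-growth assumption on $(\mu,\nu)$. A subsidiary subtlety is the measurable selection needed to glue the conditional optimizers $\lambda^x$ into a valid joint model on $\R_+^3$. Once these pieces are resolved, the three paragraphs above assemble to give both directions of the duality claimed in the theorem.
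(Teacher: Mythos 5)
Your opening and closing moves are fine: conditioning on $X_1$ and using the tower property plus Strassen to rewrite $\hat P_F$ as $\sup_{\sigma\in\Pi(\mu,\nu)}\int C(x,\sigma^x)\,\mu(dx)$ with $C(x,p)=\sup\{\int\hat\Phi(x,\cdot)\,d\lambda:\lambda\leq_c p\}$ is a legitimate alternative to the paper's relaxed formulation, and your reduction of the dual to convex $\psi$, the subgradient construction of $\Delta$, and the weak-duality direction essentially re-derive Proposition~\ref{prop:convex order barycentric C}(b). The genuine gap is the middle step. You invoke ``the paper's reduction of non-convex weak-transport costs to equivalent convex ones'', i.e.\ Theorem~\ref{thm:convexify}; but that theorem requires $\mu$ to be continuous (atomless) and the cost to be continuous with $r$-growth, whereas Theorem~\ref{SuperRepIntro} assumes neither ($\mu$ may have atoms and $\hat\Phi$ is only upper semicontinuous). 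Followed literally, your plan proves a statement with extra hypotheses (essentially the setting of Proposition~\ref{WTFTheorem}/Theorem~\ref{BaryDuality}), not the theorem as stated. The invocation also rests on a misdiagnosis: your $C(x,\cdot)$ is \emph{concave}, so after flipping the supremum into a minimization the relevant cost $-C(x,p)=\inf\{\int(-\hat\Phi)(x,\cdot)\,d\lambda:\lambda\leq_c p\}$ is \emph{convex} in $p$ --- it is exactly the convexified cost $\tilde C=C^{\ast\ast}$ of Proposition~\ref{prop:convexify}/Lemma~\ref{lem:CastleqtildeC} applied to the barycentric cost $(x,p)\mapsto -\hat\Phi(x,b(p))$. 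So no non-convex reduction is needed; what is needed is the convex duality of Theorem~\ref{thm:duality}, and to apply it you must actually establish lower semicontinuity and the growth bound of $-C$ on $\X\times\Pc_r$ (weak compactness of $\{\lambda:\lambda\leq_c p\}$ plus uniform integrability), which you only list as an obstacle, together with the measurable-selection gluing you also defer.

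The paper's proof avoids all of this by never constructing $C$: since the martingale constraint forces $Y_1=\E[Y_2\mid\F_{T_1}]$, the objective is $\hat\Phi\bigl(X_1,b(\Law(Y_2\mid\F_{T_1}))\bigr)$, a barycentric cost on the relaxed filtered-process formulation $\Lambda(\mu,\nu)$ of Corollary~\ref{cor.w'}; Proposition~\ref{prop:convex order barycentric C}(b) then delivers the superhedging dual directly, for merely semicontinuous $\hat\Phi$ and arbitrary, possibly atomic, $\mu$. If you wish to keep your conditioning-on-$X_1$ route, the repair is to replace Theorem~\ref{thm:convexify} by Theorem~\ref{thm:duality} applied to the convex cost $-C$ and to carry out the semicontinuity and selection arguments; otherwise, observing the barycentric structure from the outset is both shorter and strictly more general.
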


\begin{remark}
It is worth noticing that the $T_3$-bond plays no special role for our results, in the sense that, by replacing it with a generic asset $S$, we would get the same superreplication result. That is, if in Setup~\ref{Tradedcalls}, instead of the $T_3$-bond, we assume that a generic asset $S$ is dynamically traded and $T_2$-calls on it statically traded, then Theorem~\ref{SuperRepIntro} holds true for any derivative of the form $\Phi(p(T_1, T_2), S_{T_1})$. In this case, from market prices we deduce the distributions under $\QQ_{T_2}$ of $X_1=1/p(T_1,T_2)$ and of $S_{T_2}$.
\end{remark}

Robust versions of the superhedging duality / FTAP have received particular attention in robust finance, see e.g.\ \cite{HoNe12}, \cite{ABPS16}, \cite{BoNu13}, \cite{BFM17}, \cite{ChKuTa17}, among many others. On the other hand, ambiguity in fixed-income markets has been recently studied by  \cite{Ho18}, \cite{FaSc19}, \cite{Ho19}, \cite{Ho20}, that assume specific models (Heath-Jarrow-Morton, Hull-White) and consider a non-dominated set of probability measures. On the other hand in the current paper, we consider a robust framework for fixed-income markets, without assuming any model nor any set of probability measures, and provide first superhedging duality results and characterization of extremal models.

\subsection{Weak transport formulation}

Our proof of Theorem \ref{SuperRepIntro} is based on new results in the theory of weak optimal transport (WOT) (see Section~\ref{sect.WOT}).
Specifically, we will show that WOT problems for general costs can be reduced to the much better understood case of WOT problems that satisfy convexity constraints. This reduction enables us to rewrite the pricing problem \eqref{eq.rob} as a weak transport problem for barycentric costs. To formulate this, we write $\Pi(\mu, \nu)$ for the set of all couplings of the probabilities $\mu, \nu$, and for $\pi \in \Pi(\mu, \nu)$ we denote its disintegration w.r.t. $\mu$ by $(\pi_x)_x$. For  a measure $p$ on $\R$ that has a finite first moment we write $b(p)$ for its barycenter.
Using these notions, it is possible to rewrite \eqref{eq.rob} as an optimization problem over measures on $\R^2$. 

\begin{proposition}\label{WTFTheorem}
Assume that $\mu, \nu$ are probabilities on $\R_+$ with finite $r$-th moment, for some $r\geq 1$, and that $\mu$ is continuous. Assume that $\hat \Phi(x,y)$ is continuous and has at most growth of order $|x|^r + |y|^r$. Then
\begin{align}\label{WTFormulation}
\hat P_F=\sup_{\pi \in \Pi(\mu, \nu)}\int \hat\Phi(x, b(\pi_x))\, \mu(dx).
\end{align}
\end{proposition}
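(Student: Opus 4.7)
My plan is to reduce both sides of \eqref{WTFormulation} to the same ``Monge''-style supremum and then to invoke a Monge--Kantorovich equivalence, which is where the assumption that $\mu$ is continuous enters.

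\textbf{Stage 1: primal as coupling problem.} I first show
\begin{equation*}
\hat P_F \;=\; \sup\Bigl\{\tint \hat\Phi\,d\rho : \rho\in \mathcal P(\R_+^2),\ \text{first marginal }\mu,\ \text{second marginal}\leqc\nu\Bigr\}.
\end{equation*}
The inequality ``$\leq$'' is immediate by taking $\rho:=\law(X_1,Y_1)$ in any admissible model: $Y_1=\EE[Y_2\mid\F_{T_1}]$ and $Y_2\sim\nu$ give $\law(Y_1)\leqc\nu$ via conditional Jensen. For ``$\geq$'', given such $\rho$ with second marginal $\lambda\leqc\nu$, Strassen supplies a martingale coupling $M$ of $\lambda$ and $\nu$; gluing its kernels $M_y$ to $\rho$ along the $Y_1$-coordinate yields a joint law of $(X_1,Y_1,Y_2)$ on $\R_+^3$, which equipped with $\F_{T_i}=\sigma(X_1,Y_1,\dots,Y_i)$, $X_0=b(\mu)$ and $Y_0=b(\nu)$ defines an element of $\mathcal Q(\mu,\nu)$ with objective value $\int \hat\Phi\,d\rho$. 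This is the delicate passage to a canonical setup that the text flags.

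\textbf{Stage 2: right-hand side of \eqref{WTFormulation} in Monge form.} For any $\pi\in\Pi(\mu,\nu)$ the map $f(x):=b(\pi_x)$ satisfies $f_*\mu\leqc\nu$ (Jensen) and returns the same integrand; conversely, given Borel $f$ with $f_*\mu=\lambda\leqc\nu$, Strassen yields a martingale kernel $K\colon\lambda\to\nu$, and $\pi_x:=K_{f(x)}$ gives $\pi\in\Pi(\mu,\nu)$ with $b(\pi_x)=f(x)$. Thus the right-hand side of \eqref{WTFormulation} equals $\sup\{\tint \hat\Phi(x,f(x))\,d\mu : f\text{ Borel},\ f_*\mu\leqc\nu\}$.

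\textbf{Stage 3: Monge $=$ Kantorovich, and the main obstacle.} Stages 1 and 2 reduce the proposition, for each $\lambda\leqc\nu$, to the identity
\begin{equation*}
\sup_{\rho\in\Pi(\mu,\lambda)}\tint\hat\Phi\,d\rho \;=\; \sup_{T\colon T_*\mu=\lambda}\tint\hat\Phi(x,T(x))\,d\mu(x),
\end{equation*}
followed by a supremum over admissible $\lambda$. The direction ``$\geq$'' is trivial. The direction ``$\leq$'' is the main obstacle and is the only place where non-atomicity of $\mu$ is used: it follows from Pratelli's theorem applied to the non-negative Borel cost $c(x,y):=C(|x|^r+|y|^r)-\hat\Phi(x,y)$ for $C$ sufficiently large. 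The additive moment term contributes only a constant depending on $\lambda$, thanks to the finite $r$-th moments of $\mu,\nu$ and to $\lambda\leqc\nu$, so the Monge--Kantorovich identity for $c$ transfers to $\hat\Phi$. The sole subtlety is to match Pratelli's formulation (for infima of non-negative measurable costs) with our $\sup$ problem over a continuous cost of polynomial growth; the reformulation above handles this cleanly.
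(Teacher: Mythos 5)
Your proof is correct, but it takes a genuinely different route from the paper. The paper never leaves the weak-transport formalism: it rewrites $\hat P_F$ as the filtered problem \eqref{eq:WOT fp} with the barycentric cost $(x,p)\mapsto\hat\Phi(x,b(p))$, identifies this with the relaxed problem over $\Lambda(\mu,\nu)\subset\Pc(\X\times\Pc(\Y))$ via Corollary~\ref{cor.w'} (Lemma~\ref{lem:process vs Lambda}), and then closes the gap between $\Lambda(\mu,\nu)$ and $\Pi(\mu,\nu)$ by Proposition~\ref{prop:convexify} and Theorem~\ref{thm:convexify}, whose engine is the density of Monge-type lifts $J(\Pi(\mu,\nu))$ in $\Lambda(\mu,\nu)$ (Lemma~\ref{lem:couplings.dense}) together with continuity of the lifted cost on $\X\times\Pc_r(\Y)$. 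You instead exploit the barycentric structure from the outset: conditional Jensen plus Strassen gluing identify $\hat P_F$ with $\sup_{\lambda\leq_c\nu}\sup_{\rho\in\Pi(\mu,\lambda)}\tint\hat\Phi\,d\rho$ (this is in effect the paper's Proposition~\ref{prop:convex order barycentric C}\ref{it:convex order barycentric C 1} combined with Corollary~\ref{cor.w'}), the same Strassen argument puts the right-hand side of \eqref{WTFormulation} in Monge form over maps $f$ with $f_\ast\mu\leq_c\nu$, and the continuity of $\mu$ enters only through Pratelli's Monge--Kantorovich equality on $\R_+\times\R_+$ after shifting the cost to be nonnegative. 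What your route buys is economy: no adapted topology, no relaxation on $\Pc(\X\times\Pc(\Y))$, no convexification, and the atomlessness is used exactly where one expects, in a finite-dimensional Monge--Kantorovich statement. What it gives up is generality: the paper's Theorem~\ref{thm:convexify} holds for arbitrary continuous costs $C\in\Phi_r(\X\times\Pc_r(\Y))$, not just barycentric ones, and also feeds the duality results, whereas your argument is tied to the barycentric form. One caveat to fix in the write-up: Pratelli's theorem requires the cost to be \emph{continuous} (nonnegative, possibly infinite-valued), not merely Borel as your parenthetical suggests; your shifted cost is continuous because $\hat\Phi$ is, so the application is legitimate, but continuity of $\hat\Phi$ is essential there, exactly as it is in the paper's density argument. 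Also make sure the shift is $K(1+|x|^r+|y|^r)$ with the constant term, so that nonnegativity follows from the stated growth bound.
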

The basic idea behind the reformulation in \eqref{WTFormulation} is to reinterpret the martingale property of $(Y_1, Y_2)$ in terms of barycenters of the disintegration of a transport plan between the prescribed marginals.

The formulation of the robust pricing problem given in Proposition \ref{WTFTheorem} plays a key role in our explicit solution of the weak transport problem in the case where $\Phi$ is a put or call option (see Section~\ref{sect.caplets}).  \eqref{WTFormulation} can also be used to formulate our superreplication theorem \ref{SuperRepIntro} in the language of optimal transport. We state here the corresponding theorem to highlight the connection with the classical Monge-Kantorovich duality:
\begin{theorem}\label{BaryDuality} 
Assume that $\mu$ is a continuous probability on a Polish space $\mathcal{X}$ and $\nu$ a probability on $\R^d$, $d\in \N$, with finite $r$-th moments, for some $r \geq 1$. Assume that $c:\mathcal{X}\times \R^d\to \R$ is continuous and has at most growth of order $d_\X(x,x_0)^r + |y|^r$ (where $x_0$ is an arbitrary fixed point in $\X$ and $d_\X$ is a compatible, complete metric on $\X$).
Then we have
\begin{align}&\hspace{-2mm} \sup_{\pi \in \Pi(\mu, \nu)}\tint c(x, b(\pi_x))\, \mu(dx) \\ = &\,  \inf\left\{\tint\phi\, d\mu+ \tint \psi \, d\nu: c(x, y_1) \leq \phi(x) +  \psi(y_2) + 
\Delta(y_1) (y_2 - y_1)\, \mbox{ for all } x,y_1, y_2\right\}  \label{Dref1} \\
= &\,  \inf\left\{\tint\phi\, d\mu+ \tint \psi \, d\nu: c(x, y) \leq \phi(x) +  \psi(y) \, \mbox{ for all } x,y, \text{ $\psi$ convex}\right\}, \label{Dref2}
\end{align}
where the infimum is taken over continuous functions $\phi, \psi$ that are bounded in absolute terms by a multiple of $x\mapsto (1+d_\X(x,x_0))^r$, $y\mapsto (1+|y|)^r$, respectively{, and $\Delta$ is measurable.}
\end{theorem}
We note that equality between \eqref{Dref1} and \eqref{Dref2} appears quite natural upon expressing $\phi$ in terms of an $\inf$/$\sup$ convolution.

\subsection{Organisation of the paper.}
In Section~\ref{sect.robust} we reformulate the robust pricing problem in discounted terms (with all quantities expressed in units of $T_2$-bonds) and illustrate in the case of caplets our results on superreplication duality and characterization of primal and dual optimizers. In Section~\ref{sect.WOT} we formulate and prove our new results for the weak optimal transport problem, and from these we derive the results stated in the introduction.

\section{Robust pricing problem}\label{sect.robust}
\subsection{On discounting. }\label{sect.disc}
Throughout this section, we work in the framework described in Sections~\ref{sect.intro_pr}-\ref{sect.intro_super}, thus in particular under Setup~\ref{Tradedcalls}. Recall that we discount our financial instruments by expressing them in terms of $T_2$-bonds, see \eqref{eq.disc.intro}. In particular we consider
\begin{align}\label{eq:T2discount}
X_1=\tfrac1{p(T_1,T_2)}, \quad Y_1=\tfrac{p(T_1,T_3)}{p(T_1,T_2)},\quad Y_2= p(T_2, T_3).\end{align}
Then a generic $T_2$-forward measure is compatible with the information given by the market if for all $K\geq 0$ (cf. \eqref{eq.pr.fwd})  
\begin{align}\label{eq.mk.prC}
\begin{split} 
\frac{C(T_1,T_2,K)}{p(0,T_2)}&=\EE^{\QQ_{T_2}}\left[\frac{\left({p(T_1,T_2)}-K\right)^+}{p(T_1,T_2)}\right]=\EE^{\QQ_{T_2}}\left[\left(1-KX_1\right)^+\right], \\
\frac{C(T_2,T_3,K)}{p(0,T_2)}&=\EE^{\QQ_{T_2}}\left[\frac{\left({p(T_2,T_3)}-K\right)^+}{p(T_2,T_2)}\right]=\EE^{\QQ_{T_2}}\left[(Y_2-K)^+\right].
\end{split}
\end{align}
Therefore, by \cite{BL78}, from the observed call prices we can deduce the distribution under $\QQ_{T_2}$ of $X_1$ and $Y_2$, which we denoted by $\mu$ and $\nu$, respectively.

We are interested in a derivative of the form $\Phi(p(T_1, T_2), p(T_1,T_3))$, with maturity $T_1$. In terms of $T_2$-bonds, its payoff equals
$$
\frac{\Phi(p(T_1, T_2), p(T_1,T_3))}{p(T_1,T_2)} 
= \Phi\left(\frac1{X_1},\frac{ Y_1}{X_1}\right)X_1.$$
This justifies our notation in Section \ref{sect.intro_super}, where we denoted the discounted payoff as a function of $X_1,Y_1$:
\begin{equation}\label{eq.phidisc}
\hat\Phi (X_1, Y_1),
\end{equation}
i.e.\ formally we define 
\begin{align}
\hat \Phi(x_1,y_1):=\Phi\left( \frac 1{x_1}, \frac{y_1}{x_1}\right)x_1.
\end{align} 
According to a $T_2$-forward measure $\Q_{T_2}$, the discounted price for such a derivative is then 
$$\EE^{\QQ_{T_2}}\left[\frac{\Phi(p(T_1, T_2), p(T_1,T_3))}{p(T_1,T_2)}\right] = 
\EE^{\QQ_{T_2}}\left[\hat\Phi (X_1, Y_1)\right]. 
$$
Therefore, the robust price bounds are given by
optimization problems (cf. \eqref{eq.rob})
\begin{eqnarray}\label{eq.rob_bounds}
\inf_{\QQ_{T_2}\in\mathcal{Q}(\mu,\nu)}
\EE^{\QQ_{T_2}}[\hat\Phi(X_1, Y_1)],\qquad
\sup_{\QQ_{T_2}\in\mathcal{Q}(\mu,\nu)}
\EE^{\QQ_{T_2}}[\hat\Phi(X_1, Y_1)].
\end{eqnarray}

On the other hand, the products available for trading in the market suggest the following semi-static hedging strategies, again expressed in discounted terms:
\begin{equation}\label{eq.repl}  
\phi(X_1) +  \psi(Y_2) + \Delta(X_1,Y_1) (Y_2 - Y_1).
\end{equation}
In view of the definition of the discounted assets in \eqref{eq:T2discount}, the static parts $\phi(X_1)$, $\psi(Y_2)$ correspond to vanilla options written on $p(T_1, T_2)$ and $p(T_2, T_3)$, respectively. As usual these could be approximated using call/put options. 
 The dynamic part $ \Delta(X_1,Y_1) (Y_2 - Y_1)$ corresponds to rebalancing the portfolio in $T_1$ in a self-financing way, between $T_2$- and $T_3$-bonds, according to $\Delta$.
 The quantity in \eqref{eq.repl} then amounts to the value of such a portfolio at time $T_2$. In fact, we find that it suffices to rebalance at time $T_1$ only depending on the value of $Y_1$, that is, it suffices to consider strategies of the form $\Delta(Y_1)$ rather than $\Delta(X_1,Y_1)$,
see Theorem~\ref{SuperRepIntro}.

\subsection{Robust pricing of caplets: primal and dual optimizers}\label{sect.caplets}
In this section we consider vanilla derivatives of the type $\Phi=(p(T_1,T_3)-K)^+$, and comment on their primal and dual optimizers. W.l.o.g.\ we can consider $K=1$. In discounted terms, this leads to considering the following function in \eqref{eq.phidisc}:
\[
\hat\Phi(x,y)=(y-x)^+.
\]
By Proposition~\ref{WTFTheorem}, the robust pricing bounds in \eqref{eq.rob_bounds} can be computed via
\begin{align}\label{eq.bounds.caplets.l}
& \inf_{\pi \in \Pi(\mu, \nu)}\int (b(\pi_x)-x)^+\, \mu(dx),\\
& \sup_{\pi \in \Pi(\mu, \nu)}\int (b(\pi_x)-x)^+\, \mu(dx),
\label{eq.bounds.caplets.u}
\end{align}
respectively.
In Section~\ref{app.dualopt} we discuss in detail the WOT for cost functions of the form $C(x,p)=\theta(b(p)-x)$, where $\theta $ is convex or concave. In particular we will determine the primal as well as dual optimizers and thus  obtain the solutions to the problems \eqref{eq.bounds.caplets.l} and \eqref{eq.bounds.caplets.u}:
\begin{itemize}
\item \emph{Lower bound.}
 The optimizer for the lower bound \eqref{eq.bounds.caplets.l} is determined by the so called  \emph{weak monotone rearrangement} of $\mu$ and $\nu$ (see \cite{AlCoJo17b} and \cite{BaBePa19} for details): Briefly, by using $\ast$ to indicate a push-forward measure, and $\leq_c$ to denote convex order between measures\footnote{Two measures $\nu',\nu \in \Pc_1(\R^d)$ are in convex order, denoted by $\nu' \leq_c \nu$, if $\tint\theta d\nu' \leq \tint\theta d\nu$ for all convex $\theta \colon \R^d \to \R$.}, the family
 $$\{T: \R\to\R, T \text{ is monotone, 1-Lip}, T_\ast(\mu)\leq_c \nu\}$$ has a unique element $\hat T$ for which the $1$-Wasserstein distance $\mathcal W_1(\mu, \hat T_*(\mu))$ is minimized.  
 \eqref{eq.bounds.caplets.l} is then minimized if we put $Y_1=\hat T(X_1)$ and couple $Y_1$ and $Y_2$ by an arbitrary martingale coupling of $T_*(\mu)$ and $\nu$.
 In particular, in this market model, $X_1=\frac{1}{p(T_1,T_2)}$ and $Y_1=\tfrac{p(T_1,T_3)}{p(T_1,T_2)}$ are comonotone.
 
 The corresponding sub-replication strategy is determined in Example~\ref{exa.call}, \eqref{eq.trip.1}.
 It consists in the static position of being short a call on $X_1$ (that is, short puts on $p(T_1,T_2)$) and holding a call on $Y_2$ (that is, holding a call on $p(T_2,T_3)$), and on rearranging the portfolio at time $T_1$ (between the $T_2$ and the $T_3$ bonds) 
in order to hold one unit of the $T_3$-bond if $Y_1$ is above a certain threshold and else investing all in the $T_2$-bond.

\item \emph{Upper bound.} 
On the other hand, the upper bound \eqref{eq.bounds.caplets.l} is obtained in a market where  $Y_1=Y_2$ a.s.\ and where $X_1, Y_1$ are coupled through the \emph{anticomonotone coupling}. In particular,
\[
p(T_1,T_3)= p(T_1,T_2)p(T_2,T_3)\quad \text{a.s.}
\]

The corresponding superreplication strategy  (Example~\ref{exa.call},  \eqref{eq.trip.2})  consists in a static position of holding a put on $X_1$ (that is, long calls on $p(T_1,T_2)$) and being short a put on $Y_2$ (that is, short a put on $p(T_2,T_3)$), and on rearranging the portfolio at time $T_1$ (between the $T_2$- and the $T_3$-bonds) 
in order to be one unit short in the $T_3$-bond if $Y_1$ is below a certain threshold and else investing all in the $T_2$-bond.
\end{itemize}

\section{Weak Optimal Transport problem}\label{sect.WOT}

For the convenience of the reader who is potentially unfamiliar with the mathematical finance aspect of this work, this section can be read independently of the previous sections.
The optimal transport problem for weak transport costs (WOT) was initiated by  \cite{GoRoSaTe17} motivated by applications in geometric inequalities.
Their contribution kicked off a vivid research activity by various groups: \cite{GoRoSaSh18}, \cite{AlCoJo17a}, \cite{AlCoJo17b}, \cite{GoJu18}, \cite{AlBoCh18}, \cite{Sh16}, \cite{BaBePa18}, \cite{BaBePa19}, \cite{BaPa19}, \cite{BaPa20}, \cite{FaGoPr20}.

In this section we use techniques established in WOT to prove our main results ({stated in the introduction}). 
On the way, we prove some auxiliary results that might be of independent interest.

Specifically, we will show in Theorem \ref{thm:convexify} that, for $\mu, C$ continuous, the WOT transport problem can be reduced  to the case where the cost function is convex in the second argument. We prove that 
\begin{equation}
		\label{eq:primal cvx hull no atoms 0}
		\inf_{\pi \in \Pi(\mu,\nu)} \int_{\X} C(x,\pi_x) \, \mu(dx) = \inf_{\pi \in \Pi(\mu,\nu)} \int_\X C^{\ast\ast}(x, \pi_x) \, \mu(dx),
	\end{equation}
where for each $x$, the function $ C^{\ast\ast}(x, \cdot)$ denotes the convex hull of the function $p\mapsto C(x,p)$. This reduction is relevant since virtually all results obtained in WOT so far assume convexity in the second argument.

In Subsection \ref{ss:barycentric} we consider a particular class of WOT problems, where the cost function is barycentric.
We draw a connection of barycentric costs to a specific class of OT problems where the second marginal is not fixed, but has to satisfy a convex order constraint.
Moreover we derive duality results for semi-continuous costs of barycentric type that closely resemble the classical Monge-Kantorovich duality.

\begin{remark}
Following  mathematical finance tradition, we focused on ``super-hedging'' rather than on ``sub-hedging''-results in the introductory Sections 1 and 2.  In contrast, optimal transport problems are usually phrased  as minimization problems.   In the case of the weak transport problem, this sign-convention also matters for the formulation of the convexity assumption on the cost function and for the dual problem that is  based on a specific $\inf$-convolution. We will thus switch  sign in this section and hope that this does not cause  confusion. 
\end{remark}

\subsection{Notation and setting} \label{ss:notation}

Fix $r \in [1,\infty)$.
We denote by $\X$ and $\Y$ Polish spaces with compatible metrics $d_\X$ and $d_\Y$.
We denote by $\Pc_r(\X)$ the set of probability measures on $\X$ which finitely integrate $x \mapsto d_\X(x,x_0)^r$ (for some $x_0 \in \X$ and therefore also for any).
The $r$-Wasserstein distance $\WA_r(\mu,\nu)$ is defined for probabilities $\mu,\nu \in \Pc_r(\X)$
by
\[	\WA_r(\mu,\nu)^r = \inf_{ \pi \in \Pi(\mu,\nu) } \int_{\X \times \X} d_\X(x,x')^r \, \pi(dx,dx'),	\]
and we equip $\Pc_r(\X)$ with the topology induced by $\WA_r$.
The set of continuous functions which are absolutely bounded by a multiple of $1 + d_\X(x,x_0)^r$ is denoted by $\mathcal C_r(\X)$.
For products $\X \times \Y$ of Polish spaces, we will consider the metric $d_r((x,x'), (y,y'))^r = d_\X(x,x')^r + d_\Y(y,y')^r$ and the corresponding Wasserstein space $\Pc_r(\X \times \Y)$.
We will often use the notation $\mu(f)$ to abbreviate $\int_\X f(x) \, \mu(dx)$.

\subsubsection{The weak optimal transport problem}
Consider a measurable cost function $C \colon \X \times \Pc(\Y) \to (-\infty,\infty]$ satisfying the standing assumption that there exist  a constant $K > 0$ and $(x_0,y_0) \in \X \times \Y$ with
\begin{equation} \label{eq:lower bound}
C(x,p) \geq - K \left( 1 + d_\X( x, x_0 )^r + \WA_r(p, \delta_{y_0} )^r \right).
\end{equation}
Then the weak transport problem with cost $C$ between $\mu \in \Pc_r(\X)$ and $\nu \in \Pc_r(\Y)$
is given by
\begin{equation} \label{WOT} \tag{WOT}
	\inf_{\pi \in \Pi(\mu,\nu)} \int_\X C(x,\pi_x) \, \mu(dx).
\end{equation}
Note that \eqref{eq:lower bound} guarantees that \eqref{WOT} is well-defined.
Since the disintegration of a measure is measurable but not (weakly) continuous, we observe an inherent asymmetry in \eqref{WOT}.
To be able to obtain basic existence and duality results, different additional conditions  have been  imposed on  the cost function $C$ in the literature. In particular it is generically assumed that $C$ is  lower semicontinuity and satisfies
\begin{equation}
	\label{eq:convexity assumption}
	\forall x \in \X \colon p \mapsto C(x,p)\text{ is convex}.
\end{equation}
 In \cite{BaBePa18} it is  established that these conditions are in fact sufficient to guarantee existence of a minimizer as well as a duality. 
The key observation in \cite{BaBePa18}  is that the spaces $\X$ and $\Y$ play different roles: $\X$ is akin to the state space of a stochastic process at time 1 and $\Y$ akin to the state space at time 2.  This viewpoint leads to a technical convenient relaxation of \eqref{WOT} that we discuss in the next section.

\subsubsection{The relaxation of WOT}
We start by defining an embedding of $\Pc(\X \times \Y) $ into  $\Pc(\X \times \Pc(\Y))$. To this end we write 
$\proj^i$ for the projection onto the $i$-th component and consider the disintegration $(\pi_x)_{x\in \X }$ of  a measure $\pi \in \Pc(\X \times \Y) $ w.r.t.\ its first marginal $\proj^1_*(\pi)$. Specifically we define 
\[ J(\pi) := (x \mapsto (x,\pi_x))_\ast\left( \proj^1_\ast(\pi) \right) \in \Pc(\X \times \Pc(\Y)). \]
The embedding $J$ forms the basis for the relaxed version of \eqref{WOT}.
As we are interested in probabilities / processes with prescribed marginals $\mu,\nu$, we need to  make sense of what it means for a measure in $\Pc(\X \times \Pc(\Y))$ to have $\nu$ as its ``second marginal''.
To this end, consider the intensity maps $\hat I \colon \Pc(\X \times \Pc(\Y)) \to \Pc(\X \times \Y)$ and $I \colon \Pc(\Pc(\Y)) \to \Pc(\Y)$, where for $P \in \Pc(\X \times \Pc(\Y))$ and $Q \in \Pc(\Pc(\Y))$ their intensities $\hat I(P)$ and $I(Q)$ are given by the unique measures satisfying
\begin{gather}
	\label{eq:def_hat I}
	\hat I(P)(f) = \int_{\X \times \Pc(\Y)} \int_{\Y} f(x,y)\, p(dy) \, P(dx,dp) \quad \forall f \in \mathcal C_b(\X \times \Y), \\
	\label{eq:def_I}
	I(Q)(f) = \int_{\Pc(\Y)} \int_\Y f(y) \, p(dy) \, Q(dp)\quad \forall f \in \mathcal C_b(\Y),
\end{gather}
respectively.
Note that $\hat I$ is the left inverse of $J$, i.e.\ $\hat I(J(\pi)) = \pi$ for any $\pi \in \Pc(\X \times \Y)$.
We denote by $\Pc(\X \leadsto \mathcal Z)$ the set of probability measures on $\X \times \mathcal Z$ concentrated on the graph of a measurable function.
Then $J$ maps onto $\Pc(\X \leadsto \Pc(\Y))$, and $\hat I$ restricted to $\Pc(\X \leadsto \Pc(\Y))$ is its inverse.

We define $\Lambda(\mu,\nu) \subset \Pc(\X \times \Pc(\Y))$ by
\begin{equation}
	\label{eq:def_Lambda}
	\Lambda(\mu,\nu) := \left\{ P \in \Pc(\X \times \Pc(\Y)) \colon \hat I(P) \in \Pi(\mu,\nu) \right\}.
\end{equation}
Since $\proj^1_\ast(\hat I(P)) = \proj^1_\ast P$ and $\proj^2_\ast ( \hat I(P)) = I( \proj^2_\ast P)$, we obtain
\begin{equation}
	\label{eq:def_Lambda2}
	\Lambda(\mu,\nu) = \left\{ P \in \Pc(\X \times \Pc(\Y)) \colon \proj^1_\ast P = \mu, I(\proj^2_\ast P)
	= \nu \right\} = \bigcup_{\substack{\nu' \in \Pc(\Pc(\Y)) \\ I(\nu') = \nu} } \Pi(\mu,\nu').
\end{equation}
Following \cite{BaBePa18}, we consider the following relaxation of \eqref{WOT}
\begin{equation}
	\label{WOT'} \tag{WOT'}
	\inf_{P \in \Lambda(\mu,\nu)} \int_{\X \times \Pc(\Y)} C(x,p) \, P(dx,dp).
\end{equation}
Clearly, we have that $J(\Pi(\mu,\nu)) \subset \Lambda(\mu,\nu)$ and hence \eqref{WOT'}$\le$\eqref{WOT}, but in many cases of interest more can be said.
As long as $\mu$ is atomless, we have by the next lemma that $J(\Pi(\mu,\nu))$ is dense in $\Lambda(\mu,\nu)$ and hence the values of the two problems coincide under mild regularity assumptions. 

\begin{proposition}\label{prop:couplings.dense}
	Let $\mu \in \Pc(\X)$ be continuous and $\nu \in \Pc(\Y)$.
	Then the set $J(\Pi(\mu,\nu)) = \Lambda(\mu,\nu) \cap \Pc(\X \leadsto \Pc(\Y))$ is dense in $\Lambda(\mu,\nu)$.
If in addition $\mu \in \Pc_r(\X), \nu \in \Pc_r(\Y)$, and $C \in \mathcal C_r(\X \times \Pc_r(\Y))$, the values of \eqref{WOT} and \eqref{WOT'} coincide.
\end{proposition}

\begin{proof}
It is well-known, see e.g.\ \cite{Pr07b}, that, for a  continuous probability measure $\mu$ on some Polish space $\X$ and an arbitrary probability measure $\nu'$ on some Polish space $\mathcal Z$, the set of Monge couplings, that is $\Pi(\mu,\nu') \cap \Pc(\X \leadsto \mathcal Z)$, is dense in $\Pi(\mu,\nu')$.
	Applying this to $\mathcal Z = \mathcal P(\Y)$ and $\nu' \in \Pc(\Pc(\Y))$ yields that $\Pi(\mu,\nu') \cap \Pc(\X \leadsto \Pc(\Y))$ is dense in $\Pi(\mu,\nu')$. Hence, \eqref{eq:def_Lambda2} yields that
	\begin{align*}
		\Lambda(\mu,\nu) \cap \Pc(\X \leadsto \Pc(\Y)) =
		\bigcup_{\substack{ \nu' \in \Pc(\Pc(\Y))\\ I(\nu') = \nu}} \Pi(\mu,\nu') \cap \Pc(\X \leadsto \Pc(\Y))
	\end{align*}
	is dense in $\Lambda(\mu,\nu)$.
	Finally, since $\hat I$ restricted to $\Pc(\X \leadsto \Pc(\Y))$ is the inverse of $J$, we have that $J \colon \Pi(\mu,\nu) \to \Pc(\X \leadsto \Pc(\Y)) \cap \Lambda(\mu,\nu)$ is a bijection and $J(\Pi(\mu,\nu)) = \Pc(\X \leadsto \Pc(\Y)) \cap \Lambda(\mu,\nu)$.
Now let $\mu\in \Pc_r(\X)$, $\nu \in \Pc_r(\Y)$, and $C \in \mathcal C_r(\X \times \Pc_r(\Y))$.
	For $P \in \Lambda(\mu,\nu)$ we find a sequence $P^k \in \Lambda(\mu,\nu) \cap \Pc_r(\X \leadsto \Pc_r(\Y))$ that converges to $P$ weakly. 
	Since the marginals are fixed with finite $r$-th moments, Definition 6.8 in \cite{Vi09} yields that this convergence holds even in $\WA_r$
	on $\Pc_r(\X \times \Pc_r(\Y))$.
	Let $\pi^k \in \Pi(\mu,\nu)$ with $J(\pi^k) = P^k$, $k \in \N$. Then
	\begin{align*}
	\inf_{\pi \in \Pi(\mu,\nu)} \int_\X C(x,\pi_x) \, \mu(dx) &\leq \liminf_{k \to \infty} \int_\X C(x,\pi^k_x) \, \mu(dx) \\
	&= \lim_{k \to \infty} \int_{\X \times \Pc_r(\Y)} C(x,p) \, P^k(dx,dp) = \int_{\X \times \Pc_r(\Y)} C(x,p) \, P(dx,dp).
	\end{align*}
	As $P$ was an arbitrary element of $\Lambda(\mu,\nu)$ and \eqref{WOT}$\ge$\eqref{WOT'}, this shows that the values of \eqref{WOT} and \eqref{WOT'} coincide.
\end{proof}

\subsubsection{On the process interpretation of  the weak transport problem and its relaxation}\label{sect.inter}
A common interpretation of probability measures $\pi \in \Pi(\mu,\nu)$ in optimal transport is to view $\pi$ as the law of a two step stochastic process $(X_1, X_2)=(X, Y)$ where $X \sim \mu$ and $Y \sim \nu$.
Due to the inherent asymmetry of \eqref{WOT}, this point of view seems even more natural in the framework of weak optimal transport.
Specifically, the usual weak transport problem can be reformulated as an optimization problem over (2-period) stochastic processes  as indicated by the following equality:
\begin{align}\label{eq:WOT2proc1} \inf_{\pi \in \Pi(\mu,\nu)} \int_{\X} C(x,\pi_x) \, \mu(dx) = 	\inf_{ \substack{(\Omega, \P, X,Y) \\ X \sim \mu, Y \sim \nu}} \mathbb E_\P[C(X, \Law(Y|X))],\end{align}
where $\Law$ denotes the law under $\P$.
Notably, also the relaxed problem introduced in the previous subsection admits a natural formulation in terms of \emph{filtered processes}, that is, processes together with a filtration. Indeed, relaxing  the set $\Pi(\mu,\nu)$ on the left-hand side of \eqref{eq:WOT2proc1} corresponds on the right-hand side of \eqref{eq:WOT2proc1} to minimizing over adapted processes together with arbitrary filtrations, rather than allowing only for filtrations generated by the process itself: 
\begin{proposition}\label{pr:ProcessInterpretation} Let $C: \X\times \Pc(\Y)\to \R$ be measurable. Then we have
\begin{align}\label{eq:WOT2proc2} \inf_{P \in \Lambda(\mu,\nu)} \int_{\X} C(x,p) \, P(dx,dp) = 	\inf_{\substack{(\Omega, \P,(\F_t)_{t=1,2},X,Y) \\ X \sim \mu, Y \sim \nu}} \E_\P[C(X,\Law(Y|\F_1))],\end{align}
in the sense that either side is defined if the other is and then the two are equal. 
\end{proposition}

To obtain \eqref{eq:WOT2proc2} we need to establish the appropriate correspondence between elements of $     \Lambda(\mu,\nu)   $ and stochastic processes with marginals $\mu,\nu$, which is what we do in the proof of the proposition.
\begin{proof}[Proof of Proposition \ref{pr:ProcessInterpretation}]
To show `$\leq$' assume that $(\Omega, \F, \P)$ is some probability space and that $(X, Y)$, $X \sim \mu, Y\sim \nu$, is  a process adapted to a filtration $(\F_t)_{t=1,2}$.  Then this process induces the required candidate in $\Lambda(\mu, \nu)$ by setting 
\begin{align}P:= \Law(X, \Law(Y|\F_1)).
\end{align}
To show the other inequality `$\geq $', fix $P \in \Lambda(\mu,\nu)$.
	Define a filtered probability space by $\X \times \Pc(\Y) \times \Y$ with $\sigma$-algebra $\F = \mathcal B(\X) \otimes \mathcal B(\Pc(\Y)) \otimes \mathcal B(\Y)$, probability measure $\mathbb P(dx,dp,dy) = P(dx,dp)p(dy)$, and filtration $\F_1 = \sigma((x,p,y) \mapsto (x,p))$, $\F_2 = \F$.
	Then the process  $(X(x,p,y) = x, Y(x,p,y) = y)$ is as required on the r.h.s. of \eqref{eq:WOT2proc2}.
\end{proof}
\newcommand{\AW}{\mathcal {AW}}
\newcommand{\FFP}{\mathrm{FP}}
\begin{remark}
We briefly describe the wider context of the relation between $\Pc(\X \times \Pc(Y))$ and the filtered processes which is the topic of \cite{BaBePa21}. There it is detailed that the space $\FFP$ of filtered (stochastic)  processes is naturally equipped with an adapted variant of the Wasserstein distance denoted by $\AW$. Two processes have distance $0$ if and only if they have the same probabilistic properties in a specific sense. Identifying such processes, the space $(\FFP, \AW) $ is a complete metric space, isometric to a classical Wasserstein space $(Z, \WA_Z)$. Specifically, in the case of two periods,
\[ (\Pc(\X\times \Pc(\Y)) , \WA) \quad \text{is isometric to} \quad (\FFP, \AW) \]
and the corresponding isometry (and its inverse, respectively) are the operations given in the proof of Proposition \ref{pr:ProcessInterpretation}. Stochastic processes where the filtration is generated by the process itself  correspond to elements of $J(\Lambda(\mu, \nu))$. Proposition \ref{prop:couplings.dense} then asserts precisely that such processes with given marginals are dense in the set of general processes with these marginals.
\end{remark}

\subsection{Weak transport duality for non-convex cost} \label{ss:WOT duality}

The main result of this section is Theorem~\ref{thm:convexify}, which can be seen as a justification for the convexity assumption on the cost $C$ (in the sense of \eqref{eq:convexity assumption}) that is typically used in the WOT-literature.
First we show, in Proposition~\ref{prop:convexify}, that the relaxed problem \eqref{WOT'} for a cost $C$ coincides with a weak transport problem \eqref{WOT} for its convexification $C^{\ast\ast}$.
Then in Theorem~\ref{thm:convexify} we obtain that the values of \eqref{WOT} for cost $C$ and the convexification $C^{\ast\ast}$ coincide, provided that the cost is continuous and the first marginal is continuous as well.
Hence, when one is solely interested in the value of \eqref{WOT} for continuous cost, it is of no harm to convexify $C$ and thus work in the framework of classical WOT theory.

For the sake of completeness we include the following (slightly more general) version of the first assertion of \cite[Theorem 3.1]{BaBePa18}, where we weaken the assumption that $C$ is bounded from below to \eqref{eq:lower bound}.
So far the weak transport duality was usually stated in the form of \eqref{eq:duality 1} using an $\inf$-convolution instead of pairs of dual functions, c.f.\ \cite[Theorem 9.6]{GoRoSaTe17}, \cite[Theorem 4.2]{AlBoCh18} and \cite[Theorem 3.1]{BaBePa18}.
A minor contribution of the next theorem is the harmonisation of the weak transport duality and the Kantorovich duality (known from classical optimal transport) in terms of dual pairs instead of utilizing an $\inf$-convolution.

\begin{theorem}[Duality] \label{thm:duality}
	Let $(\mu,\nu) \in \Pc_r(\X) \times \Pc_r(\Y)$ and $C \colon \X \times \Pc_r(\Y) \to (-\infty,\infty]$ be lower semicontinuous such that 
	Then
	\begin{align} \label{eq:duality 1}
	\inf_{P \in \Lambda(\mu,\nu)} \int C(x,p) \, P(dx,dp) &= \sup_{\psi \in \mathcal C_r(\Y)} \nu(\psi) + \mu(R_C\psi) \\ \label{eq:duality 2}
	&= \sup_{ \substack{ \phi \in \mathcal C_r(\X),\, \psi \in \mathcal C_r(\Y) \\ \phi(x) + p(\psi) \leq C(x,p) }} \mu(\phi) + \nu(\psi),
	\end{align}
	where $R_C\psi(x) := \inf \left\{-p(\psi) + C(x,p) \colon p \in \Pc_r(\Y)\right\}$ and $\mu(R_C\psi):= -\infty$ if the integral of $R_C \psi$ w.r.t.\ $\mu$ is not well-defined.
\end{theorem}

\begin{proof}
	Define the auxiliary cost function
	 $C^K \colon \X \times \Pc(\Y) \to [0,\infty)$ by
	\[ C^K(x,p) := C(x,p) + K\left( 1 + d_\X(x,x_0)^r + \WA_r(p,\delta_{y_0})^r \right),\]
	where $(x,p) \in \X \times \Pc(\Y)$.
	For $P \in \Lambda(\mu,\nu)$ we have
	\begin{align*}
	& \int_{\X \times \Pc(\Y)} 1 + d_\X(x,x_0)^r + \WA_r(p,\delta_{y_0})^r \, P(dx,dp) \\
	&\phantom{  \int 1 + d_\X(x,x_0) + } = 1 + \int_\X d_\X(x,x_0)^r \, \mu(dx) + \int_{\X \times \Pc(\Y)} \int_\Y d_\Y(y,y_0)^r \, p(dy) \, P(dx,dp) \\
	&\phantom{  \int 1 + d_\X(x,x_0) + } = 1 + \int_\X d_\X(x,x_0)^r \, \mu(dx) + \int_\Y d_\Y(y,y_0)^r \, \nu(dy),
	\end{align*}
	and therefore
	\begin{multline*}
	\int_{\X \times \Pc(\Y)} C^K(x,p) \, P(dx,dp) = \int_{\X \times \Pc(\Y)} C(x,p) \, P(dx,dp) \\ + K \left( 1 + \int_\X d_\X(x,x_0)^r \, \mu(dx) + \int_\Y d_\Y(y,y_0)^r \, \nu(dy) \right).
	\end{multline*}
	For $\psi \in \mathcal{C}_r(\Y)$ we write
	\[ \psi_K(y) := \psi(y) - K \left( 1 + d_\Y(y,y_0)^r \right) \in \mathcal C_r(\Y) \]
	and obtain the following relation between the inf-convolutions of $C$ and $C^K$:
	\begin{align*}
	R_{C^K} \psi(x) &= \inf_{p \in \Pc_r(\Y)} -p(\psi) + C^K(x,p) \\
	&= \inf_{p \in \Pc_r(\Y)} -p(\psi) + C(x,p) + K\left( 1 + d_\X(x,x_0)^r + \int_\Y d_\Y(y,y_0)^r \, p(dy)\right) \\
	&= \inf_{p \in \Pc_r(\Y)} -p(\psi_K) + C(x,p) + K d_\X(x,x_0)^r \\
	&= R_C \psi_K(x) + Kd_\X(x,x_0)^r.
	\end{align*}
	In particular, the $\mu$-integral of $R_{C^K}\psi$ is well-defined if and only if the $\mu$-integral
	of $R_C\psi_K$ is.
	Since $C^K$ is bounded from below and lower semicontinuous, \cite[Theorem 3.1]{BaBePa18} yields
	\footnote{The supremum in the duality of \cite[Theorem 3.1]{BaBePa18} is taken over all functions $\psi
	\in \mathcal C_r(\Y)$ which are bounded from above. Therefore, $R_{C^K}\psi$ is bounded from below 
	and its $\mu$-integral is well-defined. We circumvent this by defining the $\mu$-integral of
	$R_{C^K}\psi$ appropriately, that is $-\infty$, whenever it is not well-defined.}
	\begin{align*}
	\inf_{P \in \Lambda(\mu,\nu)} \int_{\X \times \Pc(\Y)} C^K(x,p) \, P(dx,dp) &= \sup_{\psi \in \mathcal C_r(\Y)} \nu(\psi) + \mu(R_{C^K}\psi) \\
	&= \sup_{\psi \in \mathcal C_r(\Y)} \nu(\psi) + \int_\X R_C \psi_K(x) + Kd_\X(x,x_0)^r \, \mu(dx) \\
	&= K\left(1 + \int_\X d_\X(x,x_0)^r \, \mu(dx) + \int_\Y d_\Y(y,y_0)^r \, \nu(dy)\right) \\
	&\phantom{=}+ \sup_{\psi \in \mathcal C_r(\Y)} \nu(\psi_K) + \mu(R_C\psi_K).
	\end{align*}
	Rearranging the terms and relabeling $\psi_K$ as $\psi$ yields \eqref{eq:duality 1}.
	
	The next goal is to show \eqref{eq:duality 2}, which resembles more closely the classical Kantorovich duality.
	By the first part of the proof, we may assume w.l.o.g.\ that $C$ is nonnegative.
	It is well-known that for any lower semicontinuous, nonnegative function $f$ (on a metric space), there exists a sequence of nonnegative functions $(f_k)_{k \in \N}$ such that $f_k$ is $k$-Lipschitz continuous and absolutely bounded by $k$, and $f_k \nearrow f$.
	
	Let $(C^K)_{k \in \N}$ be such a sequence for $C$.
	Monotone convergence and existence of minimizers, see \cite[Theorem 2.9]{BaBePa18}, yield
	\begin{align*}
	\sup_k \inf_{P \in \Lambda(\mu,\nu)} \int_{\X \times \Pc(\Y)} C^K(x,p) \, P(dx,dp) = \inf_{P \in \Lambda(\mu,\nu)} \int_{\X \times \Pc(\Y)} C(x,p) \, P(dx,dp).
	\end{align*}
	Assume for a moment that \eqref{eq:duality 2} holds for all $k \in \N$, then
	\begin{align*}
	\inf_{P \in \Lambda(\mu,\nu)} \int_{\X \times \Pc(\Y)} C(x,p) \, P(dx,dp) &= \sup_k \sup_{ \substack{ \phi \in \mathcal C_r(\X),\, \psi \in \mathcal C_r(\Y) \\ \phi(x) + p(\psi) \leq C^K(x,p) } } \nu(\psi) + \mu(\phi) \\
	&\leq \sup_{ \substack{ \phi \in \mathcal C_r(\X),\, \psi \in \mathcal C_r(\Y) \\ \phi(x) + p(\psi) \leq C(x,p) } } \nu(\psi) + \mu(\phi) \\
	&\leq \inf_{P \in \Lambda(\mu,\nu)} \int C(x,p) \, P(dx,dp).
	\end{align*}
	Therefore, the inequalities are in fact equalities and \eqref{eq:duality 2} holdsh for $C$ as in the statement of the theorem.
	
	It is sufficient to show \eqref{eq:duality 2} for $k$-Lipschitz weak transport costs $C$, $k\in\N$.
	When $C$ is $k$-Lipschitz, its $\inf$-convolution $R_C\psi$ is also $k$-Lipschitz as the next 
	computation shows:
	\[ |R_C\psi(x) - R_C\psi(x')| \leq \sup_{p \in \Pc_r(\Y)} |C(x,p) - C(x',p)| \leq k d_\X(x,x').\]
	As a consequence we have that $R_C\psi \in \mathcal C_r(\X)$, and
	\begin{align*}
	\sup_{\psi \in \mathcal C_r(\Y)} \nu(\psi) + \mu(R_C\psi) &\geq \sup_{ \substack{ \phi \in \mathcal C_r(\X),\, \psi \in \mathcal C_r(\Y) \\ \phi(x)  + p(\psi) \leq C(x,p) } } \nu(\psi) + \mu(\phi).
	\end{align*}
	On the other hand, the reverse inequality is easy to show:
	Let $\phi \in \mathcal C_r(\X)$ and $\psi \in \mathcal C_r(\Y)$ such that $\phi(x) + p(\psi) \leq C(x,p)$.
	Then $\phi(x) \leq -p(\psi) + C(x,p)$ for all $(x,p) \in \X \times \Pc_r(\Y)$, hence,
	\[ \phi(x) \leq \inf_{p \in \Pc_r(\Y)} -p(\psi) + C(x,p) = R_C \psi(x)\quad \forall x\in \X. \]
	Therefore we have
	\[ \nu(\psi) + \mu(\phi) \leq \nu(\psi) + \mu(R_C\psi), \]
	which readily shows the reverse inequality and completes the proof.
\end{proof}

The following result represents a version of Jensen's inequality that proves to be useful in our context.
\begin{lemma}
	\label{lem:Jensen}
	Let $F\colon \Pc_r(\Y) \to (-\infty,\infty]$ be convex.
	Assume that either of the following holds:
	\begin{enumerate}[label = (\alph*)]
		\item \label{it:jensen lsc}
			$F$ is lower semicontinuous;
		\item \label{it:jensen usc}
			$F$ is upper semicontinuous and upper bounded by a multiple of  $p \mapsto \WA_r(p,p_0)^r$ for some $p_0 \in \Pc_r(\Y)$.
	\end{enumerate}
	Then we have for $Q \in \Pc_r(\Pc_r(\Y))$
	\[	F(I(Q)) \leq \int_{\Pc_r(\Y)} F(q) \, Q(dq). \]
\end{lemma}

\begin{proof}
	Fix $Q \in \Pc_r(\Pc_r(\Y))$. 
	If \ref{it:jensen lsc} is satisfied, for $p \in \Pc_r(\Y)$ by Fenchel's duality theorem we have
	\[	F(p) = \sup_{\psi \in \mathcal C_r(\Y)} p(\psi) - \sup_{q \in \Pc_r(\Y)} q(\psi) - F(q). \]
	Therefore, for $Q \in \Pc_r(\Pc_r(\Y))$ with $I(Q) = p$, by interchanging supremum with integration, we have that
	\[	F(p) = \sup_{\psi \in \mathcal C_r(\Y)} \int_{\Pc_r(\Y)} q'(\psi) \, Q(dq')  - \sup_{q \in \Pc_r(\Y)} q(\psi) - F(q) \le \int_{\Pc_r(\Y)} F(q') \, Q(dq').	\]

	Now assume that \ref{it:jensen usc} holds.	
	Since $\Pc_r(\Y)$ is Polish, there exists by \cite[Lemma 3.22]{Ka02} a map 
	$h \colon (0,1) \to \Pc_r(\Y)$ such that $Q = h_\ast \lambda$,
	where $\lambda$ denotes the Lebesgue measure on $(0,1)$.
	We define, for $1 \leq k \leq n \in \N$,
	\[	Q_k^n(dq) := n \int_{\frac{k-1}{n}}^{\frac{k}{n}} \delta_{h(t)} \, dt.	\]
	It is immediate that $\frac1n \sum_{k = 1}^n Q_k^n = Q$, and
	\begin{align*}
		\WA_r\left( \frac1n \sum_{k = 1}^n \delta_{I(Q_k^n)},Q\right)^r
		&= \WA_r\left( \frac1n \sum_{k = 1}^n \delta_{n \int_{\frac{k-1}{n}}^{\frac{k}{n}} h(t) \, dt},
		\int_0^1 \delta_{h(t)} \, dt \right)^r
	\\	&\leq \frac1n \sum_{k=1}^n \WA_r\left( \delta_{n \int_{\frac{k-1}{n}}^{\frac{k}{n}} h(t) \, dt}, n \int_{\frac{k-1}{n}}^{\frac{k}{n}} \delta_{h(t)} \, dt \right)^r
	\\	&\leq \sum_{k=1}^n \int_{\frac{k-1}{n}}^{\frac{k}{n}}\WA_r\left( n \int_{\frac{k-1}{n}}^{\frac{k}{n}} h(t)\, dt, h(s) \right)^r \, ds
	\\	&\leq n \sum_{k=1}^n \int_{\frac{k-1}{n}}^{\frac{k}{n}} \int_{\frac{k-1}{n}}^{\frac{k}{n}} \WA_r(h(t),h(s))^r \, dt \, ds
	\\	&\leq \sup\left\{ \int_{(0,1)^2} \WA_r(h(t),h(s))^r \chi(dt,ds) \colon \chi \in \Pi(\lambda,\lambda) \right. 
	\\ & \phantom{\leq \sup\Big\{ \int_{(0,1)^2} \WA_r(h(t),h(s))^r \chi(dt,ds) \colon } \left. \int_{(0,1)^2} |s - t|^r \, \chi(dt,ds) \leq \frac{1}{n^r} \right\},
	\end{align*}
	where we applied multiple times Jensen's inequality (which is possible in this settings thanks to the first
	part of this lemma). The right-hand side vanishes for $n \to \infty$ by \cite[Lemma 2.7]{Ed19}.
	Due to convexity of $F$, we have
	\[
		 F(I(Q)) = F\left(\frac1n \sum_{k = 1}^n I(Q_k^n)\right) \leq \frac1n \sum_{k=1}^n F(I(Q_k^n)).
	\]
	We conclude by taking the limit superior for $n \to \infty$, which yields by upper semicontinuity
	\[
		F(I(Q)) \leq \limsup_{n \to \infty} \frac1n \sum_{k = 1}^n F(I(Q_k^n)) \leq \int_{\Pc_r(\Y)} F(q) \, Q(dq). \qedhere
	\]
\end{proof}

\begin{lemma}
	\label{lem:CastleqtildeC}
	Assume that $C \colon \X \times \Pc_r(\Y) \to (-\infty,\infty]$ is measurable and bounded from below as in \eqref{eq:lower bound}. 
	For $x \in \X$ we denote by $C^{\ast\ast}(x,\cdot)$ the lower semicontinuous convex envelope of $C(x,\cdot)$.
	For $x \in \X$, $p \in \Pc_r(\Y)$ we write $\tilde C(x,p):=\inf_{Q \in \Pc_r(\Pc_r(\Y)), \, I(Q) = p}  \int C(x,q) \, Q(dq)$.
	Let $Q \in \Pc_r(\Pc_r(\Y))$ with $I(Q) = p$, then
	\begin{equation}
		\label{eq:CastleqtildeC}
		C^{\ast\ast}(x,p) \leq \tilde C(x,p) \leq \int_{\Pc_r(\Y)} \tilde C(x,q) \, Q(dq).
	\end{equation}
	Moreover, if $C(x,\cdot)$ is lower semicontinuous there is equality, i.e.\ $C^{\ast\ast}(x,p) = \tilde C(x,p)$.
\end{lemma}

\begin{proof}
	By Lemma \ref{lem:Jensen} we have for $Q \in \Pc_r(\Pc_r(\Y))$ with $I(Q) = p$ that
	\[	C^{\ast\ast}(x,p)  \le \int_{\Pc_r(\Y)} C^{\ast\ast}(x,q) \, Q(dq) \leq \int_{\Pc_r(\Y)} C(x,q)\, Q(dq).	\]
	Taking the infimum over all $Q \in \Pc_r(\Pc_r(\Y))$ with $I(Q) = p$ shows the first inequality
	in \eqref{eq:CastleqtildeC}.

	To see the second inequality in \eqref{eq:CastleqtildeC}, we observe that the map $q \mapsto
	\tilde C(x,q)$ is by Proposition 7.47 in \cite{BeSh78} lower semianalytic, and that $D = \{(p,Q) \in
	\Pc_r(\Y) \times \Pc_r(\Pc_r(\Y)) \colon I(Q) = p\}$ is closed.
	Therefore, by Proposition 7.50 in \cite{BeSh78}, for each $\epsilon > 0$ there is an analytically measurable
	function $q\mapsto Q^{q}$ with $I(Q^q)=q$ and
	\begin{equation} \label{eq:measurable selection}
		\int_{\Pc_r(\Y)} C(x,q')	\, Q^{q}(dq') \leq \tilde C(x,q) + \epsilon\quad \forall q \in\Pc_r(\Y).
	\end{equation}
	We now compute the intensity of $\hat Q(dq) := \int_{\Pc_r(\Y)} Q^{q'}(dq) \, Q(dq')$:
	\[	I(\hat Q) = \int_{\Pc_r(\Y)} I(Q^{q'}) \, Q(dq') = \int_{\Pc_r(\Y)} q' \, Q(dq') = I(Q) = p.\]
	Hence,
	\[	\tilde C(x,p) \leq \int_{\Pc_r(\Y)} C(x,q) \, \hat Q(dq) \leq \int_{\Pc_r(\Y)} \tilde C(x,q) \, Q(dq) + \epsilon,	\]
	which proves the second inequality in \eqref{eq:CastleqtildeC} as $\epsilon$ is arbitrary.	

	If $C(x,\cdot)$ is lower semicontinuous, we have by Theorem \ref{thm:duality}
	\[ \sup_{\psi \in \mathcal C_r(\Y)} p(\psi) + R_C\psi(x) = \inf_{P \in \Lambda(\delta_x,p)} \int C(x',p) \, P(dx',dp) = \tilde C(x,p), \]
	whence $C^{\ast\ast}(x,p) = \tilde C(x,p)$ for all $p \in \Pc_r(\Y)$.
\end{proof}

\begin{proposition} \label{prop:convexify}
	Let $\mu \in \Pc(\X)$ and $\nu \in \Pc_r(\Y)$.
	Under the assumptions of Lemma \ref{lem:CastleqtildeC}, we have
	\begin{align} \label{eq:convexify wot'}
		\inf_{P \in \Lambda(\mu,\nu)} \int C(x,p) \, P(dx,dp) &= \inf_{P \in \Lambda(\mu,\nu)} \int \tilde C(x,p) \, P(dx,dp) \\ \label{eq:primal tildeC} 
		&= \inf_{\pi \in \Pi(\mu,\nu)} \int \tilde C(x,\pi_x) \, \mu(dx) \\
		&\geq \inf_{P \in \Lambda(\mu,\nu)} \int C^{\ast\ast}(x,p) \, P(dx,dp) \label{eq:primal cvx hull relaxed}\\
		&= \inf_{\pi \in \Pi(\mu,\nu)} \int C^{\ast\ast}(x,\pi_x) \, \mu(dx). \label{eq:primal cvx hull}
	\end{align}
	Moreover, if $C(x,\cdot)$ is lower semicontinuous for all $x \in \X$, there is equality.
\end{proposition}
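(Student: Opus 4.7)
The plan is to prove the chain of relations one link at a time. For the equality \eqref{eq:convexify wot'}, the inequality $\geq$ is immediate from $\tilde C \leq C$, obtained by taking $Q = \delta_p$ in the definition of $\tilde C$. For the reverse $\leq$, I would use measurable selection: for each $\epsilon > 0$ pick a measurable kernel $(x,p) \mapsto Q^\epsilon_{x,p} \in \Pc_r(\Pc_r(\Y))$ with $I(Q^\epsilon_{x,p}) = p$ that is $\epsilon$-optimal in the infimum defining $\tilde C(x,p)$. Given $P \in \Lambda(\mu,\nu)$, set $P'(dx,dq) := \int Q^\epsilon_{x,p}(dq)\,P(dx,dp)$; a short computation using \eqref{eq:def_Lambda2} shows $\proj^1_\ast P' = \mu$ and $I(\proj^2_\ast P') = \nu$ (the latter precisely because $I(Q^\epsilon_{x,p}) = p$), so $P' \in \Lambda(\mu,\nu)$ and $\int C\,dP' \leq \int \tilde C\,dP + \epsilon$, which closes the direction as $\epsilon \to 0$.

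For the equalities \eqref{eq:primal tildeC} and \eqref{eq:primal cvx hull}, in both cases the direction $\leq$ follows from $J(\pi) \in \Lambda(\mu,\nu)$ for $\pi \in \Pi(\mu,\nu)$ together with $\int f(x,p)\,J(\pi)(dx,dp) = \int f(x,\pi_x)\,\mu(dx)$. For the reverse direction in \eqref{eq:primal tildeC}, I disintegrate $P(dx,dp) = \mu(dx)\,P_x(dp)$, set $\pi := \hat I(P)$ so that $\pi_x = I(P_x)$, and form $R_x(dq) := \int Q^\epsilon_{x,p}(dq)\,P_x(dp)$ with the kernels from the previous step; since $I(R_x) = I(P_x) = \pi_x$, the definition of $\tilde C$ gives $\tilde C(x,\pi_x) \leq \int C(x,q)\,R_x(dq) \leq \int \tilde C(x,p)\,P_x(dp) + \epsilon$, which integrates to the desired inequality. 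For \eqref{eq:primal cvx hull} one argues identically, or more simply by Jensen's inequality using the Fenchel-Moreau representation of $C^{**}_x$ as a supremum of continuous affine functions on $\Pc_r(\Y)$. The pointwise bound $C^{**}_x(p) \leq \tilde C(x,p)$ underlying \eqref{eq:primal cvx hull relaxed} is also a Fenchel-Moreau computation: for any continuous affine $\ell \leq C(x,\cdot)$ on $\Pc_r(\Y)$ and any $Q$ with $I(Q) = p$, linearity gives $\ell(p) = \int \ell\,dQ \leq \int C(x,q)\,Q(dq)$; taking the supremum over such $\ell$ (recovering $C^{**}_x(p)$) and then the infimum over $Q$ yields the claim.

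The moreover is the main obstacle. Under the l.s.c.\ hypothesis on $C(x,\cdot)$, the goal is to show $\tilde C(x,\cdot) = C^{**}_x$ pointwise, which upgrades the middle inequality to equality throughout. The map $\tilde C(x,\cdot)$ is already convex (by averaging admissible $Q$'s) and dominated by $C(x,\cdot)$, so it suffices to establish that $\tilde C(x,\cdot)$ is itself lower semicontinuous on $\Pc_r(\Y)$: by maximality of $C^{**}_x$ among l.s.c.\ convex minorants of $C(x,\cdot)$, this forces $\tilde C(x,\cdot) \leq C^{**}_x$, and combined with Step~3 equality follows. I plan to derive lower semicontinuity of $\tilde C(x,\cdot)$ via a tightness-Fatou argument: for $p_n \to p$ in $\WA_r$ and $Q_n \in \Pc_r(\Pc_r(\Y))$ that are $(1/n)$-optimal for $\tilde C(x,p_n)$, the identity $\int \WA_r(q,\delta_{y_0})^r\,Q_n(dq) = \int d_\Y(y,y_0)^r\,p_n(dy)$ provides uniform $r$-moment control on $\{Q_n\}$, giving relative compactness. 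Passing to a subsequential weak limit $Q$ (for which $I(Q) = p$ by continuity of $I$) and invoking Portmanteau with the l.s.c.\ integrand $C(x,\cdot)$ (whose lower bound \eqref{eq:lower bound} ensures the relevant expectations behave well) yields $\tilde C(x,p) \leq \int C(x,q)\,Q(dq) \leq \liminf_n \int C(x,q)\,Q_n(dq) \leq \liminf_n \tilde C(x,p_n)$. The delicate point is verifying tightness of $\{Q_n\}$ at the level of measures on $\Pc_r(\Y)$, which is exactly where the $r$-moment structure built into the ambient spaces enters essentially.
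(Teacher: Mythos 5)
Your handling of the displayed chain \eqref{eq:convexify wot'}--\eqref{eq:primal cvx hull} is correct and essentially the paper's argument: the same $\epsilon$-optimal kernel $(x,p)\mapsto Q^{x,p}$ (the paper justifies its existence via lower semianalyticity of $\tilde C$ and the Bertsekas--Shreve selection theorem, a point you should make explicit rather than just saying ``pick a measurable kernel''), the same composition trick giving $\tilde C(x,I(P_x))\le\int \tilde C(x,p)\,P_x(dp)+\epsilon$, the same Fenchel--Moreau computation for $C^{\ast\ast}_x\le\tilde C(x,\cdot)$ (these two facts are exactly Lemma~\ref{lem:CastleqtildeC}), and the same Jensen argument identifying \eqref{eq:primal cvx hull relaxed} with \eqref{eq:primal cvx hull}.

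The gap is in the ``moreover'' part, precisely where you leave the paper's route. Convexity of $\tilde C(x,\cdot)$ is fine, but your compactness step is not: the uniform bound $\int\WA_r(q,\delta_{y_0})^r\,Q_n(dq)=\int d_\Y(y,y_0)^r\,p_n(dy)\le M$ does \emph{not} give relative compactness of $\{Q_n\}$ over the Polish space $(\Pc_r(\Y),\WA_r)$ when $\Y$ is unbounded. Indeed, take $q_k=(1-\tfrac1k)\delta_{y_0}+\tfrac1k\delta_{y_k}$ with $d_\Y(y_k,y_0)^r=kR$ and $Q_k=\delta_{q_k}$: the moments are constantly $R$, yet no subsequence of $(Q_k)$ converges weakly over $(\Pc_r(\Y),\WA_r)$ (test against $q\mapsto\min\{\WA_r(q,\delta_{y_0})^r,2R\}$). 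What the moment bound does give is a subsequential limit in the weak--weak sense only, and that is insufficient for your Portmanteau step, since $C(x,\cdot)$ is assumed lower semicontinuous only for the $\WA_r$-topology and is unbounded below through the term $-K\,\WA_r(\cdot,\delta_{y_0})^r$ in \eqref{eq:lower bound}. The argument can be repaired, but it must use the full strength of $p_n\to p$ in $\WA_r$, not just bounded moments: tightness and uniform $r$-integrability of $\{p_n\}$ transfer to $\{Q_n\}$ via $\int q(K\complement)\,Q_n(dq)=p_n(K\complement)$ and the elementary inequality $\bigl(\int d_\Y(y,y_0)^r\,q(dy)-R\bigr)^+\le\int\bigl(d_\Y(y,y_0)^r-R\bigr)^+\,q(dy)$, which produce $\WA_r$-compact subsets of $\Pc_r(\Y)$ carrying uniformly most of the $Q_n$-mass; one then applies Portmanteau to the nonnegative $\WA_r$-l.s.c.\ function $C(x,\cdot)+K\bigl(1+d_\X(x,x_0)^r+\WA_r(\cdot,\delta_{y_0})^r\bigr)$ and uses that $\int\WA_r(q,\delta_{y_0})^r\,Q_n(dq)\to\int\WA_r(q,\delta_{y_0})^r\,Q(dq)$, both sides being moments of the intensities. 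None of this is indicated in your sketch, and the justification you do give is false as stated.

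For comparison, the paper bypasses lower semicontinuity of $\tilde C(x,\cdot)$ altogether: in Lemma~\ref{lem:CastleqtildeC} it observes that $\tilde C(x,p)$ is the value of the relaxed problem with the degenerate marginals $(\delta_x,p)$, so Theorem~\ref{thm:duality} yields $\tilde C(x,p)=\sup_{\psi\in\Phi_r(\Y)}p(\psi)+R_C\psi(x)$, which is $C^{\ast\ast}_x(p)$ by Fenchel duality; equality throughout the chain then follows at once. If you want a self-contained argument you should either adopt that shortcut or carry out the tightness/uniform-integrability repair above in full.
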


\begin{proof}
	Let $P \in \Lambda(\mu,\nu)$ and write $\hat I(P) =: \pi \in \Pi(\mu,\nu)$. For disintegrations
	$(P_x)_{x \in \X}$ and $(\pi_x)_{x \in \X}$ of $P$ and $\pi$ w.r.t.\ $\mu$, respectively, we have
	by definition of the intensity maps, c.f.\ \eqref{eq:def_hat I} and \eqref{eq:def_I}, the relation
	\begin{equation} \label{eq:relation of disintegration and intensity}
		I(P_x) = \pi_x \quad \mu\text{-almost surely}.
	\end{equation}
	Due to Lemma \ref{lem:CastleqtildeC} and \eqref{eq:relation of disintegration and intensity}, we have
	\begin{align*}
		\int_{\X \times \Pc_r(\Y)} C(x,p) \, P(dx,dp) &\geq \int_{\X \times \Pc_r(\Y)} \tilde C(x,p) \, P(dx,dp)
		= \int_{\X} \int_{\Pc_r(\Y)} \tilde C(x,p) \, P_x(dp) \, \mu(dx)
	\\	&\geq \int_\X \tilde C(x, \pi_x) \, \mu(dx) \geq \int_\X C^{\ast\ast}(x, \pi_x) \, \mu(dx).
	\end{align*}
	Clearly, \eqref{eq:primal cvx hull relaxed} is dominated by \eqref{eq:primal cvx hull}, and at the
	same time we have by Lemma \ref{lem:Jensen}
	\[	\int_{\X \times \Pc_r(\Y)} C^{\ast\ast}(x,p) \, P(dx,dp) = \int_{\X} \int_{\Pc_r(\Y)} 
		C^{\ast\ast}(x,p) \, P_x(dp) \, \mu(dx) \geq \int_\X C^{\ast\ast}(x,I(P_x)) \, \mu(dx),	\]
	which proves that \eqref{eq:primal cvx hull relaxed} and \eqref{eq:primal cvx hull} coincide.

	It remains to show that the left-hand side of \eqref{eq:convexify wot'} is dominated by
	\eqref{eq:primal tildeC}. Pick $\pi \in \Pi(\mu,\nu)$.
	By the measurable selection argument presented in the proof of Lemma \ref{lem:CastleqtildeC}, we find for $\epsilon > 0$ an analytically measurable map $q \mapsto Q^q$ with $I(Q^q) = q$ and such that \eqref{eq:measurable selection} holds.
	The intensity of $P(dx,dq) := \mu(dx) \, Q^{\pi_x}(dq)$ is given by
	\[	\hat I(P)(dx,dy) = \mu(dx) \, I(Q^{\pi_x})(dy) = \mu(dx) \, \pi_x(dy) = \pi(dx,dy).	\]
	Thus $P(dx,dp) \in \Lambda(\mu,\nu)$ and
	\begin{align*}
		\int_{\X \times \Pc_r(\Y)} C(x,p) \, P(dx,dp) = \int_{\X} \int_{\Pc_r(\Y)} C(x,p) \, Q^{\pi_x}(dp) \, \mu(dx) \leq \int_\X \tilde C(x,\pi_x) \, \mu(dx) + \epsilon.
	\end{align*}
	We see that \eqref{eq:primal tildeC} dominates the left-hand side of \eqref{eq:convexify wot'} by recalling that $\epsilon > 0$ was arbitrary.
	Finally, the last statement follows from Lemma \ref{lem:CastleqtildeC}.
\end{proof}

\begin{theorem}\label{thm:convexify}
	Assume that $\mu \in \Pc_r(\X)$ is continuous, $\nu \in \Pc_r(\Y)$, and $C \in \mathcal C_r(\X \times \Pc_r(\Y))$.
	Then
	\begin{equation}
		\label{eq:primal cvx hull no atoms}
		\inf_{\pi \in \Pi(\mu,\nu)} \int_{\X} C(x,\pi_x) \, \mu(dx) = \inf_{\pi \in \Pi(\mu,\nu)} \int_\X C^{\ast\ast}(x,\pi_x) \, \mu(dx).
	\end{equation}
	In particular, the minimization problems \eqref{eq:convexify wot'}-\eqref{eq:primal cvx hull} yield the same value.
\end{theorem}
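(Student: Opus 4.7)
The plan is to combine Proposition~\ref{prop:convexify} with the density result from Lemma~\ref{lem:couplings.dense}. Since $C \in \Phi_r(\X \times \Pc_r(\Y))$ is continuous, in particular each slice $C(x,\cdot)$ is lower semicontinuous, so Proposition~\ref{prop:convexify} already gives
\[
\inf_{\pi \in \Pi(\mu,\nu)} \int_\X C^{\ast\ast}_x(\pi_x)\, \mu(dx) = \inf_{P \in \Lambda(\mu,\nu)} \int C(x,p)\, P(dx,dp),
\]
so to establish \eqref{eq:primal cvx hull no atoms} it is enough to identify this common value with $\inf_{\pi \in \Pi(\mu,\nu)} \int C(x,\pi_x)\, \mu(dx)$.

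One direction is immediate from the fact that $J$ embeds $\Pi(\mu,\nu)$ into $\Lambda(\mu,\nu)$ with $\int C\, dJ(\pi) = \int C(x,\pi_x)\, \mu(dx)$, giving the inequality $\inf_{P \in \Lambda(\mu,\nu)} \int C\, dP \leq \inf_{\pi \in \Pi(\mu,\nu)} \int C(x,\pi_x)\, \mu(dx)$. For the reverse inequality I would fix $P \in \Lambda(\mu,\nu)$ and invoke Lemma~\ref{lem:couplings.dense} (whose hypothesis is met as $\mu$ is continuous) to produce a sequence $\pi_n \in \Pi(\mu,\nu)$ with $J(\pi_n) \to P$ weakly. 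It then suffices to show $\int C\, dJ(\pi_n) \to \int C\, dP$; infimising over $P$ then yields $\inf_\pi \int C(x,\pi_x)\, \mu(dx) \leq \inf_P \int C\, dP$, and the proof is complete.

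The main technical step is the passage to the limit with a continuous cost of $r$-growth, for which the rigidity of moments on $\Lambda(\mu,\nu)$ is the key observation: writing out via the intensity maps, for every $P' \in \Lambda(\mu,\nu)$ one has
\[
\int \WA_r(p,\delta_{y_0})^r\, P'(dx,dp) = \int d_\Y(y,y_0)^r\, \nu(dy), \qquad \int d_\X(x,x_0)^r\, P'(dx,dp) = \int d_\X(x,x_0)^r\, \mu(dx),
\]
independently of $P'$. Since $|C(x,p)| \leq K(1 + d_\X(x,x_0)^r + \WA_r(p,\delta_{y_0})^r)$, the envelope dominating $|C|$ has \emph{constant} integral across $\Lambda(\mu,\nu)$; a standard uniform-integrability argument (decompose $C = ((-M)\vee C) \wedge M + \text{remainder}$, bound the remainder by the envelope on $\{|C|>M\}$, use weak convergence on the bounded continuous part and the constant-integral bound on the tail) then upgrades weak convergence to convergence of the $C$-integrals. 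The main obstacle I anticipate is making precise the topology provided by Lemma~\ref{lem:couplings.dense} and checking that this moment rigidity on $\Lambda(\mu,\nu)$ cooperates with it correctly; once that subtlety is dispatched, the claim drops out.
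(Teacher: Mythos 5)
Your proposal is correct and follows essentially the same route as the paper: reduce via Proposition~\ref{prop:convexify} to showing that the relaxed value over $\Lambda(\mu,\nu)$ is not smaller than the value over $\Pi(\mu,\nu)$, then use the denseness from Lemma~\ref{lem:couplings.dense} (valid since $\mu$ is atomless) and pass to the limit in the $C$-integrals. The paper phrases the final step as an upgrade from weak to $\WA_r$-convergence on $\Pc_r(\X\times\Pc_r(\Y))$, using exactly the fact that the marginals $(\mu,\nu)$ are fixed; your moment-rigidity/uniform-integrability truncation is the same mechanism spelled out in slightly more detail.
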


\begin{proof}
	Since $\mu$ is continuous, by Proposition \ref{prop:couplings.dense} the values of \eqref{WOT} and \eqref{WOT'} coincide.
	Then, by Proposition \ref{prop:convexify} we have that the right-hand side of \eqref{eq:primal cvx hull no atoms} and \eqref{WOT'} yield the same value.
	Hence we conclude that there holds equality in \eqref{eq:primal cvx hull no atoms}.
	The last statement follows by Proposition \ref{prop:convexify}, since $C \ge \tilde C \ge C^{\ast\ast}$.
\end{proof}

\begin{proof}[Proof of Proposition~\ref{WTFTheorem}]
First, we note that \eqref{eq.rob} can be recast as a \eqref{WOT'}-problem.
Indeed, consider $C\colon \R \times \Pc_r(\R) \to \R$, $C(x,p) := \hat \Phi(x,b(p))$.
Then, we have by Proposition \ref{pr:ProcessInterpretation} that
\begin{equation}
	\label{eq:WTReformulation}
	\sup_{\Q \in \mathcal Q(\mu,\nu)} \E_\Q \left[ \hat \Phi(X_1,Y_1) \right] = \sup_{P \in \Lambda(\mu,\nu)} \int C(x,p) \, P(dx,dp) = -\inf_{P \in \Lambda(\mu,\nu)} \int -C(x,p) \, P(dx,dp),
\end{equation}
where in the last equality we switch from a $\sup$- to an $\inf$-formulation to match \eqref{WOT'}.
Since $\hat \Phi$, and thus $C$, is
continuous, we obtain from Proposition~\ref{prop:convexify} that
\[
	\inf_{P \in \Lambda(\mu,\nu)} \int -C(x,p) \, P(dx,dp) = \inf_{\pi \in \Pi(\mu,\nu)} \int_\X	(-C)^{\ast\ast}(x,\pi_x) \, \mu(dx).
\]
Finally, as $\mu$ is continuous, we can apply Theorem~\ref{thm:convexify} and obtain \eqref{WTFormulation}.
\end{proof}

\begin{remark} \label{rem:convexify}
	In this remark we show that under certain assumptions \eqref{WOT} for concave cost $C$ reduces to a classical optimal transport problem.
	Recall the definition of $\tilde C(x,\cdot)$ from Lemma \ref{lem:CastleqtildeC}.
	Assume that $C(x,\cdot)$ is concave in the following sense:
	\begin{equation}
		\label{eq:concave Jensen}
		\int_{\Pc_r(\Y)} C(x,q) \, Q(dq) \leq C(x,I(Q))\quad \forall Q \in \Pc_r(\Pc_r(\Y)),
	\end{equation}
	which is satisfied as soon as $C(x,\cdot)$ is concave, lower/upper
	semicontinuous and sufficiently bounded from above, by Lemma \ref{lem:Jensen}.
	It turns out that then $\tilde C(x,\cdot)$ is in fact linear. Indeed,
	let $p \in \Pc_r(\Y)$. The pushforward measure $Q^\ast := (y \mapsto \delta_y)_\ast p$ clearly constitutes an element of $\Pc_r(\Pc_r(\Y))$ such that $I(Q^\ast) = p$. Consequentially we deduce
	\begin{equation}
		\label{eq:concavity ineq}
		\tilde C(x,p) = \inf_{\substack{Q \in \Pc_r(\Pc_r(\Y)) \\ I(Q) = p}} \int_{\Pc_r(\Y)} C(x,q) \, Q(dq) \leq \int_{\Pc_r(\Y)} C(x,q) \, Q^\ast(dq) = \int_\Y C(x,\delta_y) \, p(dy).
	\end{equation}
	On the other hand, due to \eqref{eq:concave Jensen} we have
	\begin{equation}
		\label{eq:concavity ineq2}
		\int_\Y C(x,\delta_y) \, p(dy) = \int_{\Pc_r(\Y)} \int_\Y C(x,\delta_y) \, q(dy) \, Q(dq)
		\leq \int_{\Pc_r(\Y)} C(x,q) \, Q(dq),
	\end{equation}
	for all $Q\in\Pc_r(\Pc_r(\Y))$ with $I(Q) = p$. Thus the left-hand side of \eqref{eq:concavity ineq2}
	is dominated by $\tilde C(x,p)$. By combining this with \eqref{eq:concavity ineq}, we derive
	\[ \tilde C(x,p) = \int_\Y C(x,\delta_y) \, p(dy).	\]
	Therefore, \eqref{eq:convexify wot'} reduces to a classical optimal transport problem with cost 
	$c(x,y) := C(x,\delta_y)$:
	\begin{equation*}
	\inf_{P \in \Lambda(\mu,\nu)} \int_{\X \times \Pc_r(\Y)} C(x,p) \, P(dx,dp) = \inf_{\pi \in \Pi(\mu,\nu)} \int_{\X \times \Y} c(x,y) \, \pi(dx,dy).
	\end{equation*}
\end{remark}

\subsection{Barycentric cost functions} \label{ss:barycentric}
In this section we derive some properties of weak transport problem for `barycentric' cost functions and use them to prove the main results stated in the introduction.
We call a cost function $C \colon \X \times \Pc_1(\R^d) \to (-\infty,\infty]$ barycentric if, for any $x \in \X$, $p,q \in \Pc_r(\R^d)$, we have the implication
\begin{equation}\label{eq.bary}
	b(p) = b(q) \implies C(x,p) = C(x,q),
\end{equation}
where we recall that $b(p)$ denotes the barycenter of $p$.
Equivalently,  $C$ is barycentric if there exists $c \colon \X \times \R^d \to (-\infty,\infty]$ such that
\[	C(x,p) = c\left(x, b(p)\right), \quad (x,p) \in \X \times \Pc_1(\R^d).	\]

Weak optimal transport problems where the cost function $C$ is of barycentric-type have been recently investigated by various authors in different contexts, see \cite{GoRoSaSh18}, \cite{GoRoSaTe17}, \cite{GoJu18}, \cite{Sh16}, \cite{AlCoJo17b}, \cite{DaDeTz17}, \cite{BaBePa18}, \cite{BaBePa19}, \cite{BaPa20}.
Typically in these papers, $\X$ was also given as $\R^d$ and $C(x,p) = \theta(b(p)-x)$ for some convex $\theta \colon \R^d\to \R$.
Motivated by functional inequalities, this particular problem was explored by \cite{GoRoSaTe17}, \cite{GoRoSaSh18}, \cite{Sh16}, \cite{GoJu18}.
On the other hand, in \cite{FaGoPr20} it was used to give a new proof to Cafarelli's contraction theorem.
In \cite{AlCoJo17b} the authors are mainly motivated by applications in robust mathematical finance. 
They use the barycentric WOT problem to construct a sampling technique preserving the convex order, which can then be used to approximate martingale optimal transport problems.
Finally, in \cite{DaDeTz17} the barycentric WOT problem appears in the context of revenue maximization and mechanism design in economics.
For a clearer connection of this topic to WOT we refer to \cite{BaPa20}.

The next proposition fleshes out the intrinsic connection of \eqref{WOT'} with barycentric $C$ and the convex order (and thereby convex functions). 
Recall that we denote by $\leq_c$ the convex order between probabilities.
For the special case when the cost $C(x,p)$ is given by $\theta(b(p)-x)$ the results of the next proposition can be (partially) found in \cite{GoRoSaTe17} and \cite{BaBePa18}.

\begin{proposition} \label{prop:convex order barycentric C}
	Let $\mu \in \Pc_r(\X)$, $\nu \in \Pc_r(\R^d)$, and $C$ be measurable and barycentric such $\int C(x,p) \, P(dx,dp)$ is well-defined for any $P \in \Lambda(\mu,\nu)$. Then
\begin{enumerate}[label = (\alph*)]
\item \label{it:convex order barycentric C 1}
$\displaystyle{\inf_{P \in \Lambda(\mu,\nu)} \int_{\X \times \Pc_r(\R^d)} C(x,p) \, P(dx,dp) = \inf_{\nu' \leq_c \nu} \inf_{\pi \in \Pi(\mu,\nu')} \int_{\X \times \R^d} C(x,\delta_y) \, \pi(dx,dy)}$.\\
\item \label{it:convex order barycentric C 2} In addition, if $C$ is lower semicontinuous and satisfies \eqref{eq:lower bound}, then 
\begin{align*}
	\inf_{P \in \Lambda(\mu,\nu)} & \int_{\X \times \Pc_r(\R^d)} C(x,p) \, P(dx,dp) 
\\ &= \sup \left\{ \nu(\psi) + \mu(R_C\psi) \colon \psi \in \mathcal C_r(\R^d) \text{ and concave} \right\}
\\	&= \sup \left\{ \mu(\phi) + \nu(\psi) \colon \phi \in \mathcal C_r(\X), \text{concave } \psi \in \mathcal C_r(\R^d), \phi(x) + \psi(y) \leq C(x,\delta_y)~\forall (x,y) \in \X \times \R^d\right\}
\\	&= \sup\left\{ \mu(\phi) + \nu(\psi) \colon \phi \in \mathcal C_r(\X), \psi \in \mathcal C_r(\R^d), \Delta \colon \R^d \to \R^d, \right. 
\\ & \hspace{3cm} \left. \phi(x) + \psi(z) + \Delta(y)(z - y) \leq C(x,\delta_y) ~\forall (x,y,z) \in \X \times \R^d \times \R^d \right\},
\end{align*}
where $R_C\psi$ is defined as in Theorem \ref{thm:duality}.
	\end{enumerate}	
\end{proposition}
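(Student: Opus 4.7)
For part (a) my plan is to establish a direct correspondence between $\Lambda(\mu,\nu)$ and pairs $(\nu',\pi)$ with $\nu' \leq_c \nu$ and $\pi \in \Pi(\mu,\nu')$. For the inequality ``$\geq$'', given $P \in \Lambda(\mu,\nu)$ I push $P$ forward through the barycenter map $(x,p) \mapsto (x,b(p))$; the result is a coupling $\pi \in \Pi(\mu,\nu')$ where $\nu'$ is the law of $b(p)$ under $\proj^2_\ast P$. Jensen's inequality applied to convex test functions gives $\nu' \leq_c \nu$, and barycentricity of $C$ yields $\int C(x,p)\,P(dx,dp) = \int c(x,y)\,\pi(dx,dy)$ with $c(x,y) := C(x,\delta_y)$. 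For the reverse inequality, given $\pi \in \Pi(\mu,\nu')$ with $\nu' \leq_c \nu$, Strassen's theorem provides a measurable martingale kernel $y \mapsto q_y$ with $b(q_y) = y$ and $\int q_y\,\nu'(dy) = \nu$; the measure $P := (\id_\X \otimes (y \mapsto q_y))_\ast \pi$ lies in $\Lambda(\mu,\nu)$ and realises the same cost.

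For the first equality in part (b), I would apply Theorem \ref{thm:duality} and argue that the supremum can be restricted to concave $\psi$ at no cost. The key ingredient is the classical barycentric identity $\hat\psi(y) = \sup\{p(\psi) : p \in \Pc_r(\R^d),\ b(p) = y\}$, where $\hat\psi$ denotes the concave envelope of $\psi$. Combined with barycentricity of $C$ this gives
\[
R_C\psi(x) = \inf_{y \in \R^d} \bigl( c(x,y) - \hat\psi(y) \bigr) = R_C\hat\psi(x),
\]
while $\nu(\hat\psi) \geq \nu(\psi)$, so passing from $\psi$ to $\hat\psi$ only increases the dual objective. The second equality then follows at once: for concave $\psi$ one has $R_C\psi(x) = \inf_y(c(x,y) - \psi(y))$, so setting $\phi := R_C\psi$ yields a feasible pair with the same value, while any feasible $(\phi,\psi)$ satisfies $\phi \leq R_C\psi$ pointwise and hence $\mu(\phi) \leq \mu(R_C\psi)$.

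For the third equality I would show the two inequalities separately. The bound ``third $\leq$ primal'' is a direct superreplication argument: given $(\phi,\psi,\Delta)$ feasible and $p \in \Pc_r(\R^d)$, specialising the constraint at $y = b(p)$ and integrating against $p(dz)$ annihilates the term $\Delta(b(p))(b(p) - b(p))$, leaving $\phi(x) + p(\psi) \leq C(x,p)$; integrating against $P \in \Lambda(\mu,\nu)$ and using $I(\proj^2_\ast P) = \nu$ produces $\mu(\phi) + \nu(\psi) \leq \int C\,dP$. The reverse ``second $\leq$ third'' uses a measurable selection $\Delta(y)$ with $-\Delta(y)$ in the superdifferential of the concave $\psi$ at $y$ (which exists by standard convex analysis); concavity then yields $\psi(z) + \Delta(y)(z-y) \leq \psi(y)$ for all $y,z$, so any $(\phi,\psi)$ feasible for the second problem extends to $(\phi,\psi,\Delta)$ feasible for the third with the same value.

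The main obstacle I anticipate is the concavification step in part (b): for $r > 1$ the concave envelope of an element of $\Phi_r(\R^d)$ can be identically $+\infty$ and therefore fail to be a legitimate dual variable. I would handle this by truncation, working first with bounded approximations $\psi \wedge k$ (whose concave envelopes are finite and continuous) and then passing to the limit via monotone convergence on the primal side, using the lower bound \eqref{eq:lower bound} to control sign issues in the infimum defining $R_C\psi$. Measurability of the supergradient selection used in the third equality is by contrast routine, since the graph of the superdifferential of a concave function is closed.
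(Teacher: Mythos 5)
Your part (a), your concavification step for the first equality in (b), and your supergradient argument for the third equality all follow the paper's own route (the paper phrases the concave envelope as $-(-\psi)^{\ast\ast}$ and deals with the possibly infinite envelope by noting that such $\psi$ have $\mu(R_C\psi)=-\infty$ and are irrelevant; your truncation $\psi\wedge k$ plus monotone convergence is a sound, if anything more careful, way to handle the same point). Your closing of the third equality is organised slightly differently — you show ``third $\leq$ primal'' directly by plugging $y=b(p)$ and integrating the constraint against $p$, while the paper shows ``third $\leq$ second'' via the concave function $\tilde\psi(z):=\inf\{C(x,\delta_y)-\phi(x)-\Delta(y)(z-y)\}$ — but both are correct and essentially equivalent.

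The genuine gap is in your second equality. You propose to pass from a concave $\psi$ to the pair $(\phi,\psi)$ by ``setting $\phi:=R_C\psi$'', but the supremum is taken over $\phi\in\Phi_r(\X)$, i.e.\ over \emph{continuous} functions bounded in absolute value by a multiple of $1+d_\X(x,x_0)^r$, and $R_C\psi$ need not qualify: with $C$ merely lower semicontinuous (and possibly $+\infty$-valued), $x\mapsto R_C\psi(x)=\inf_y\bigl(C(x,\delta_y)-\psi(y)\bigr)$ is an infimum of lsc functions and can fail to be continuous, can take the value $-\infty$ at some points (the growth bound \eqref{eq:lower bound} does not prevent this, since $C(x,\delta_y)-\psi(y)$ may tend to $-\infty$ as $|y|\to\infty$), and can even be $+\infty$ where $C(x,\cdot)\equiv+\infty$; so it is not an admissible dual variable, and one cannot in general find a continuous minorant of it with almost the same $\mu$-integral by soft arguments (approximation from \emph{below} by continuous functions is the delicate direction). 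Note that your truncation device does not repair this, because the defect sits in $\phi$, not in $\psi$. The paper closes exactly this step by invoking the approximation scheme from the proof of Theorem~\ref{thm:duality}: one approximates the (reduced, nonnegative) cost from below by bounded Lipschitz costs $C_k\nearrow C$, for which $R_{C_k}\psi$ is Lipschitz, hence lies in $\Phi_r(\X)$ and gives an admissible pair feasible also for $C$, and then identifies $\sup_k$ of the values for $C_k$ with the value for $C$ using existence of minimizers and monotone convergence. Your proof needs this (or an equivalent regularisation of $\phi$) spelled out; as written, the step fails.
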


\begin{proof}[Proof of Proposition~\ref{prop:convex order barycentric C}]
	
	We first show \ref{it:convex order barycentric C 1}.
	Let $\pi \in \Pi(\mu,\nu')$ with $\nu' \leq_c \nu$.
	By Strassen's theorem there is a martingale coupling $\pi^M \in \Pi_M(\nu',\nu)$, where 
	$\Pi_M(\nu',\nu) := \left\{ \pi' \in \Pi(\nu',\nu) \colon b(\pi'_x) = x~\mu\text{-a.s.} \right\}$.
	Define
	\[  P(dx,dp) := \mu(dx) \, \int_{\R^d} \delta_{\pi^M_{y}}(dp) \, \pi_x(dy). \]
	Evidently, the first marginal of $P$ is $\mu$. Then the next line of computations establishes $P \in \Lambda(\mu,\nu)$ thanks to \eqref{eq:def_Lambda2}:
	\[ \proj^2_\ast \left( \hat I(P) \right) =  \int_{\X \times \R^d} \pi^M_y \, \pi(dx,dy) = \int_{\R^d} \pi^M_y \, \nu'(dy) = \proj^2_\ast(\pi^M) = \nu. \]
	Since $C$ is barycentric, we find
	\begin{equation} \label{eq:bary1}
	\int_{\X \times \Pc_1(\R^d)} C(x,p) \, P(dx,dp) = \int_{\X \times \Pc_1(\R^d)} C(x, \pi^M_y) \, \pi(dx,dy) = \int_{\X \times \R^d} C(x,\delta_y) \, \pi(dx,dy).
	\end{equation}
	We readily derive from \eqref{eq:bary1} that the r.h.s.\ in item \ref{it:convex order barycentric C 1} dominates the l.h.s.

	To derive the reverse inequality, we note that any $P \in \Lambda(\mu,\nu)$ induces a measure $\pi \in \Pc(\X \times \R^d)$ with second marginal $\nu' \leq_c \nu$, given by $\pi := ((x,p) \mapsto (x,b(p)))_\ast P$, so that
	\begin{multline} \label{eq:bary2}
	\int_{\X \times \Pc_1(\R^d)} C(x,p) \, P(dx,dp) = \int_{\X \times \Pc_1(\R^d)} C\left(x,\delta_{b(p)}\right) \, P(dx,dp) \\ = \int_{\X \times \R^d} C(x,\delta_y) \, \pi(dx,dy).
	\end{multline}
	 To see that $\proj^2_\ast \pi = \nu'$ is in convex order with $\nu$, we pick any convex $\theta \colon \R^d \to \R$ and find that
	\[
		\nu'(\theta) = \int_{\X \times \Pc_1(\R^d)} \theta(b(p)) \, P(dx,dp) \leq \int_{\X \times \Pc_1(\R^d)}
		p(\theta) \, P(dx,dp) = \nu(\theta),
	\]
by Jensen's inequality,	which means that $\nu' \leq_c \nu$. Hence, we obtain by \eqref{eq:bary2} that the l.h.s.\ in item \ref{it:convex order barycentric C 1} dominates the r.h.s., and therefore we have equality.
	
	Now, we prove \ref{it:convex order barycentric C 2}.
	To see the first equality, note that by Theorem \ref{thm:duality} it suffices to verify that the supremum in the r.h.s.\ of \eqref{eq:duality 1} can be restricted to concave $\psi \in \mathcal C_r(\R^d)$.
	To this end, let $\psi \in \mathcal C_r(\R^d)$ and $\psi^{\ast\ast}$ be its convex envelope.
	For any $y \in \R^d$, we have by Jensen's inequality
	\[ \psi^{\ast\ast}(y) = \inf_{p \in \Pc_r(\R^d),~b(p) = y} p(\psi^{\ast\ast}) \leq \inf_{p \in \Pc_r(\R^d),~b(p) = y} p(\psi). \]
	On the other hand, the right-hand side is convex and dominated by $\psi$, whence the inequality is actually
	an equality. Therefore,	
	\begin{align}\nonumber
	R_C\psi(x) = \inf_{p \in \Pc_r(\R^d)} -p(\psi) + C(x,p) &= \inf_{y \in \R^d} C(x,\delta_y) + \inf_{p \in \Pc_r(\R^d),~b(p) = y} -p(\psi) \\ \label{eq:bary 3}
	&= \inf_{y \in \R^d} C(x,\delta_y) + (-\psi)^{\ast\ast}(y)= R_C (-(-\psi)^{\ast\ast})(x).
	\end{align}
	Since $-(-\psi)^{\ast\ast}$ is concave and dominates $\psi \in \mathcal C_r(\R^d)$, we either have that $-(-\psi)^{\ast\ast} \in \mathcal C_r(\R^d)$ or $-(-\psi)^{\ast\ast} = \infty$.
	If we are in the latter case, we have by \eqref{eq:bary 3} that $R_C\psi = -\infty$, thus, $\nu(\psi) + \mu(R_C\psi) = -\infty$.
	Hence, we may assume w.l.o.g.\ that $-(-\psi)^{\ast\ast} \in \mathcal C_r(\R^d)$ which yields
	\[
		\nu(\psi) + \mu(R_C\psi) \le \nu(-(-\psi)^{\ast\ast}) + \mu(-(-\psi)^{\ast\ast}),
	\]
	and conclude that we may restrict to concave functions in the r.h.s.\ of \eqref{eq:duality 1}.

 The second equality follows from the first equality with the same line of argument as in the proof of Theorem~\ref{thm:duality}.

	For the third equality, we will show that any admissible pair in the second supremum of item \ref{it:convex order barycentric C 2} admits a better admissible pair in the third supremum, and vice versa. 
	To this end, consider any pair $(\phi,\psi) \in \mathcal C_r(\X) \times \mathcal C_r(\R^d)$.
	If $\psi$ is concave, then there exists a measurable selection $\Delta \colon \R^d \to \R^d$ of the subgradient of $-\psi$, i.e.,
	\begin{equation} \label{eq:bary4}
		\phi(x) + \psi(z) + \Delta(y) (z - y) \leq \phi(x) + \psi(y)\quad \forall (x,y,z) \in \X \times \R^d \times \R^d.
	\end{equation}
	We derive from \eqref{eq:bary4} that $(\phi,\psi,\Delta)$ is admissible in the last supremum whenever $(\phi,\psi)$ is admissible in the second supremum.
	
	On the other hand, if there is a measurable $\Delta \colon \R^d \to \R^d$ with
	\[	\phi(x) + \psi(z) + \Delta(y)(z - y) \leq C(x,\delta_y)\quad \forall (x,y,z) \in \X \times \R^d \times \R^d,	\]
	we can define the concave function $\tilde \psi(z) := \inf\left\{ C(x,\delta_y) - \phi(x) - \Delta(y)(z-y) \colon (x,y)\in\X\times\R^d\right\}$. 
	Since $\tilde \psi$ is finitely valued, dominates $\psi$ and is concave, we find that $\tilde \psi \in \mathcal C_r(\R^d)$.
	In particular, $(\phi,\tilde \psi)$ is admissible for the second supremum
	which concludes the proof.
\end{proof}

\begin{proof}[Proof of Theorem \ref{SuperRepIntro} and Theorem \ref{BaryDuality}]
	Theorem~\ref{BaryDuality} follows directly from Theorem \ref{thm:convexify} and Proposition \ref{prop:convex order barycentric C}.

	As already noticed in the proof of Proposition \ref{WTFTheorem}, the robust optimization problem in \eqref{eq.rob} can be reformulated as in \eqref{eq:WTReformulation}.
	Finally Theorem~\ref{SuperRepIntro} is a consequence of Proposition~\ref{pr:ProcessInterpretation} and Proposition~\ref{prop:convex order barycentric C}.
\end{proof}

\subsection{Primal and dual optimizers for convex barycentric costs}\label{app.dualopt}

In this section we characterize primal and dual optimizers in the case of vanilla derivatives as announced in Section \ref{sect.caplets}.
Hence we consider costs of the form 
\begin{equation}\label{eq.cost.cb}
 C(x,p) = \theta(b(p)-x),
\end{equation}
with $\theta \colon \R \to \R$ convex, and investigate primal and dual optimizers for the problems 
\begin{align}\label{eq.inf}
	\inf_{\pi \in \Pi(\mu,\nu)} \int_\R \theta(b(\pi_x)-x) \, \mu(dx),\\ \label{eq.sup}
	\sup_{\pi \in \Pi(\mu,\nu)} \int_\R \theta(b(\pi_x)-x) \, \mu(dx),
\end{align}
where $\mu,\nu \in \Pc_1(\R)$.

\begin{corollary}
	\label{cor:dual optimizers}
	The optimization problems \eqref{eq.inf} and \eqref{eq.sup} satisfy
	\begin{gather}
		\label{eq:1d-lower bound}
\inf_{\pi \in \Pi(\mu,\nu)} \int \theta\left(b(\pi_x)-x\right) \, \mu(dx) = \sup_{\psi \in \mathcal C_1(\R), \, concave} \nu(\psi) + \mu(R_C\psi), \\
		\label{eq:1d-upper bound}
		\sup_{\pi \in \Pi(\mu,\nu)} \int_\R \theta(b(\pi_x)-x) \, \mu(dx) = \sup_{\pi \in \Pi(\mu,\nu)} \int \theta(y-x) \, \pi(dx,dy) = \inf_{\psi \in \mathcal C_1(\R), \, convex} \nu(\psi) + \mu(\overline{R}_C \psi),
	\end{gather}
	where $\overline{R}_C \psi(x) := \sup_{y \in \R} \theta(y-x)-\psi(y)$ and $R_C\psi(x) = \inf_{y \in \R} \theta(y - x) - \psi(y)$.
\end{corollary}

\begin{proof}
	For $C(x,p) = \theta(x - b(p))$ where $\theta$ is convex, we have by Theorem 2.9 in \cite{BaBePa18} that \eqref{WOT} = \eqref{WOT'}.
	Hence, \eqref{eq:1d-lower bound} is readily deduced from item \ref{it:convex order barycentric C 2} of Proposition \ref{prop:convex order barycentric C}.

	To see \eqref{eq:1d-upper bound}, we note that by Remark~\ref{rem:convexify}
	\begin{equation}
		\label{eq:sup OT problem}
		\sup_{\pi \in \Pi(\mu,\nu)} \int_\R \theta(b(\pi_x)-x) \, \mu(dx)=\sup_{\pi \in \Pi(\mu,\nu)} \int_\R \theta(y-x) \, \pi(dx,dy),
	\end{equation}
	that is, the problem reduces to a classical optimal transport problem.
	Then \eqref{eq:1d-upper bound} follows again from item \ref{it:convex order barycentric C 2} of Proposition \ref{prop:convex order barycentric C}.
\end{proof}

Let us now consider primal optimizers of \eqref{eq.inf} and \eqref{eq.sup}.
By Proposition \ref{prop:convex order barycentric C} we have
\begin{equation}
	\label{eq:1d-lower bound'}
\inf_{\pi \in \Pi(\mu,\nu)} \int_\R \theta(b(\pi_x)-x) \, \mu(dx) = \inf_{\eta \leq_c \nu}
	\inf_{\pi \in \Pi(\mu,\eta)} \int_\R \theta(y-x) \, \pi(dx,dy).
\end{equation}
A minimal measure on $\R$ for the right-hand side, denoted by $\nu^\ast \leq_c \nu$, is given by the 
image of $\mu$ under the weak monotone rearrangement $\underline T$ of $\mu$ and $\nu$, see \cite{BaBePa19}.
Note that 
\[ \underline T = F_{\nu^\ast}^{-1} \circ F_\mu\quad\mu\text{-a.s.},\]
where $F_p$ and $F_p^{-1}$ denote the cumulative distribution function and the generalized inverse distribution function, resp.\ of the probability $p \in \Pc(\R)$.

\begin{proposition}
	\label{prop:primal optimizers}
	A primal optimizer of \eqref{eq.inf} is given by the coupling $\mu(dx)\, \pi^M_{\underline T(x)}(dy)$ where $\pi^M \in \Pi_M(\mu,\underline T(\mu))$ and $\underline T$ is the weak monotone rearrangement of $\mu$ and $\nu$.
	A primal optimizer of \eqref{eq.sup} is given by the anticomonotone coupling.
\end{proposition}

\begin{proof}
	The first statement can be found in Theorem 3.1 of \cite{BaBePa19} and the second in Theorem 3.1.2 of \cite{RaRu98}.
\end{proof}

Before turning to the description of dual optimizers of \eqref{eq.inf} and \eqref{eq.sup}, we define potential candidates in the subsequent lemma.
We remark that the construction in Lemma \ref{lem:dual optimizers} is not novel, but is included for the sake of completeness as the authors are unaware of a fitting reference.
A dual optimizer of the corresponding weak transport problem is constructed in \cite{Sh16a} under the assumption that $T$ is strictly increasing.
The author remarks that the assumption is simply to avoid technicalities.

\begin{lemma} \label{lem:dual optimizers}
	Let $\theta \colon \R \to \R$ be convex, $T\colon I \to \R$ a map where $I\subset \R$, $[a,b] = \overline{\text{co}(T(I))}$, and $y_0 \in (a,b)$.
	\begin{enumerate}[label = (\alph*)]
		\item \label{it:T inc} Assume that $T$ is nondecreasing and define
		\begin{gather} \label{def:S when T inc}
		\underline S(y) :=  \inf \{ x \in I \colon T(x) \geq y \}, \quad y \in (a,b), \\
		\label{def:psi when T inc}
		\underline \psi(y) := \int_{y_0}^y \partial_- \theta\left(z - \underline S(z)\right) \, dz, \quad y \in (a,b).
		\end{gather}
		Then $\underline \psi$ is continuous on $(a,b)$ and satisfies for all $x \in I$, $y' \in [T(x-),T(x)] \cap [a,b]$ that
		\begin{equation} \label{eq:psi opt when T inc}
		- \underline \psi(y') + \theta(y' - x) = \inf_{y \in [a,b]} - \underline \psi(y) + \theta(y - x).
		\end{equation}
		\item \label{it:T dec} Assume that $T$ is nonincreasing and define
		\begin{gather} \label{def:S when T dec}
		\overline S(y):= \sup \{ x \in I \colon T(x) \geq y \}, \quad y \in (a,b), \\
		\label{def:psi when T dec}
		\overline \psi(y) := \int_{y_0}^y \partial_- \theta\left(z - \overline S(z) \right) \, dz, \quad y \in (a,b).
		\end{gather}
		Then $\overline \psi$ is continuous on $(a,b)$ and satisfies for all $x \in I$, $y' \in [T(x+), T(x)] \cap [a,b]$ that
		\begin{equation} \label{eq:psi opt when T dec}
		- \overline \psi(y') + \theta(y' - x) = \sup_{y \in [a,b]} \overline \psi(y) + \theta(y - x).
		\end{equation}
	\end{enumerate}
\end{lemma}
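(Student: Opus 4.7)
My plan is to prove both parts by interpreting $\underline{\psi}$ (resp. $\overline{\psi}$) as an antiderivative of the map $z\mapsto\partial_-\theta(z-\underline{S}(z))$, and then using the monotonicity of $\partial_-\theta$ together with the generalized-inverse property of $\underline{S}$ (resp. $\overline{S}$) to upgrade the first-order condition into a true optimality statement. The proof is genuinely one argument written twice: (a) and (b) are mirror images, so I would write (a) in detail and then indicate the sign changes needed for (b).

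For part (a), the first task is to show that $\underline{S}$ is real-valued on $(a,b)$. Since $T$ is nondecreasing, $\underline{S}$ is nondecreasing. If $\underline{S}(y)=-\infty$ for some $y>a$, then $\inf T(I)\ge y>a$, contradicting $a\in\overline{\mathrm{co}(T(I))}$; similarly $\underline{S}(y)<+\infty$ on $(a,b)$. Being monotone and finite on $(a,b)$, $\underline{S}$ is locally bounded there, so $z\mapsto \partial_-\theta(z-\underline{S}(z))$ is locally bounded (as $\partial_-\theta$ is nondecreasing on $\mathbb{R}$, hence locally bounded) and Borel measurable, which makes $\underline{\psi}$ absolutely continuous, hence continuous, on $(a,b)$.

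For the optimality \eqref{eq:psi opt when T inc}, fix $x$ and $y'\in[T(x-),T(x)]\cap[a,b]$, and rewrite the desired inequality $-\underline{\psi}(y')+\theta(y'-x)\le -\underline{\psi}(y)+\theta(y-x)$ as
\[
\int_{y'}^{y}\bigl[\partial_-\theta(z-\underline{S}(z))-\partial_-\theta(z-x)\bigr]\,dz\ \le\ 0,
\]
using $\theta(y-x)-\theta(y'-x)=\int_{y'}^{y}\partial_-\theta(z-x)\,dz$. By monotonicity of $\partial_-\theta$, it suffices to check that the bracket has the appropriate sign a.e., which reduces to a case analysis on the position of $z$. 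If $y>y'$ and $z\in(y',y)$, then $z>y'\ge T(x-)$, which forces $T(x')<z$ for every $x'<x$ and hence $\underline{S}(z)\ge x$, i.e.\ $z-\underline{S}(z)\le z-x$; thus the bracket is $\le 0$ on this range. If $y<y'$ and $z\in(y,y')$, then $z<y'\le T(x)$, so $x$ belongs to the defining set of $\underline{S}(z)$, giving $\underline{S}(z)\le x$ and the bracket is $\ge 0$; multiplied by the negative orientation of $\int_{y'}^{y}$ the overall sign is again $\le 0$.

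Part (b) proceeds verbatim with $\overline{S}$ nonincreasing and the reversed chain of implications: now $y<y'\le T(x)$ yields $\overline{S}(y)\ge x$ and $y>y'\ge T(x+)$ yields $\overline{S}(y)\le x$, so the analogous integrand has the opposite sign, producing a \emph{maximum} rather than a minimum. The main obstacle I expect is bookkeeping of one-sided limits $T(x\pm)$ relative to the strict versus non-strict inequalities above, since $T$ need not be continuous; handling equality cases correctly is what makes it necessary to use $\partial_-\theta$ consistently (rather than arbitrary subgradients) and to split the integral exactly at $y'$. Everything else is routine, and no additional assumption on $T$ beyond monotonicity is needed.
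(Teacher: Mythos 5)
Your proof is correct and follows essentially the same route as the paper: the paper studies $f(z)=-\underline\psi(z)+\theta(z-x)$ and shows its a.e.\ derivative $\partial_-\theta(z-\underline S(z))-\partial_-\theta(z-x)$ is $\le 0$ left of $T(x)$, zero on $(T(x-),T(x))$ and $\ge 0$ to the right, which is exactly your sign analysis of the same integrand, just phrased via monotonicity of $f$ instead of integrating directly from $y'$. Your finiteness/continuity argument for $\underline S$ and $\underline\psi$ likewise matches the paper's use of $[a,b]=\overline{\text{co}(T(I))}$, so no gap to report.
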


\begin{proof}
	Since $\theta$ is convex, it is locally absolutely continuous, thus
	\[  \theta(y') - \theta(y) = \int_y^{y'} \partial_- \theta(z) \, dz, \]
	where we write $\partial_- \theta$ for the left derivative of $\theta$, which is nondecreasing.
	For fixed $x \in I$ we introduce
	\[	\underline f(z) := -\underline \psi(z) + \theta(z - x) \text{ and } \overline f(z) := -\overline \psi(z) + \theta(z-x),\]
	where $z \in (a,b)$. 
	Using that convex functions are almost surely differentiable, we have $dz$-almost surely on $(a,b)$
	\begin{align}
		\label{eq:differentials}
		\begin{split}
		\frac{d}{dz} \underline f(z) = - \partial_-\theta(z - \underline S(z)) + \partial_- \theta(z - x)\\
		\frac{d}{dz} \overline f(z) = -\partial_-\theta(z - \overline S(z)) + \partial_- \theta(z - x).
		\end{split}
	\end{align}
	The maps $\underline S$ and $\overline S$ are nonincreasing and nondecreasing, respectively.
	If $T$ is monotone, then for any $z \in (a,b)$ there are $\underline x,\overline x \in I$ such that
	\begin{align*}
		T \text{ nondecreasing} \colon\; 	x \in (-\infty,\underline x] \cap I \implies T(x) < z;\;\; \text{ and }\; 
		x \in [\overline x,\infty) \cap I \implies T(x) > z;
	\\	T \text{ nonincreasing} \colon\; x \in (-\infty, \underline x] \cap I \implies T(x) > z;\;\; \text{ and }\;
		x \in [\overline x,\infty) \cap I \implies T(x) < z.
	\end{align*}
	Hence, $\underline S$ and $\overline S$ are real-valued, which also shows that
	$\underline \psi$ and $\overline \psi$ are well-defined and continuous on $(a,b)$.
	From now on, we implicitly assume that $T$ is nondecreasing when talking about $\underline f$ and $\underline S$,
	whereas we assume that $T$ is nonincreasing when talking about $\overline f$ and $\overline S$.
	We have
	\begin{align*}
		z \in (T(x-),T(x)) \implies x \leq \underline S(z) \leq \underline S(T(x)) \leq x \iff x = \underline S(z),	\\	
		z \in (T(x+),T(x)) \implies x \geq \overline S(z) \geq \overline S(T(x)) \geq x \iff x = \overline S(z).
	\end{align*}
	Therefore, we obtain
	\begin{align*}
		\frac{d}{dz}\underline f \vert_{(T(x-),T(x))} = 0 \implies \underline f\vert_{[T(x-),T(x)]} = \underline f(T(x)),
	\\	\frac{d}{dz}\overline f \vert_{(T(x+),T(x))} = 0 \implies \overline f\vert_{[T(x+),T(x)]} = \overline f(T(x)).
	\end{align*}
	It is now sufficient to show that
	\begin{equation}
		\label{eq:differential ineq}
		\frac{d}{dz} \underline f \vert_{(a,T(x)]} \leq 0 \text{ and } \frac{d}{dz} \underline f \vert_{[(T(x),b)} \geq 0;
	\quad\frac{d}{dz} \overline f \vert_{(a,T(x)]} \geq 0 \text{ and } \frac{d}{dz}
		\overline f \vert_{[T(x),b)} \leq 0.
	\end{equation}
	Due to monotonicity and by definition of $\underline S$ and $\overline S$, we find:
	\begin{align}
		\label{eq:differential props}
		\begin{split}
		z \in (a,T(x)) \implies 
		\begin{cases} 
			\underline S(z) \leq \underline S(T(x)) \leq x \implies & z - \underline S(z) \geq z - x, 
		\\ x \leq \overline S(z) \phantom{\hspace{1.675cm}} \implies & z - \overline S(z) \leq z - x,
		\end{cases}
	\\	z \in (T(x), b) \implies 
		\begin{cases} 
			x \leq \underline S(z) \phantom{\hspace{1.671cm}} \implies & z - x \geq z - \underline S(z),
		\\	\overline S(z) \leq \overline S(T(x)) \leq x \implies & z - x \leq z - \overline S(z).
		\end{cases}
		\end{split}
	\end{align}
	As the left derivative of a convex function is nondecreasing, \eqref{eq:differential ineq} follows from \eqref{eq:differentials} and \eqref{eq:differential props}.
\end{proof}

In what follows we provide a (semi-)explicit representation of the dual optimisers.
\begin{proposition}
	\label{prop:dual optimizers}
	Assume there exist $f \in L^1(\mu)$, $g \in L^1(\nu)$ such that $\theta(y-x) \leq f(x) + g(y)$.
	Then the right-hand side of \eqref{eq:1d-lower bound} is attained by
	$\underline \psi$, given in \eqref{def:psi when T inc}, for $T = \underline T$ the weak monotone rearrangement of $\mu$ and $\nu$.

	Assume there exist $\tilde f \in L^1(\mu)$, $\tilde g \in L^1(\nu)$ such that $\theta(y-x) \geq \tilde f(x) + \tilde g(y)$.
	Then the right-hand side of \eqref{eq:1d-upper bound} is attained by $\overline \psi$, given in \eqref{def:psi when T dec},
	for $T(x) = \overline T(x) := F_\nu^{-1}(1 - F_\mu(x))$.
\end{proposition}

\begin{proof}
	Let $\pi^\ast$ be a minimizer of \eqref{eq.inf} given as in Proposition \ref{prop:primal optimizers} by
	\[	\pi^\ast(dx,dy) := \mu(dx) \, \pi_{\underline T(x)}^M(dy),\]
	where $\pi^M$ is an arbitrary martingale coupling
	with marginals $\mu$ and $\nu^\ast$, and $\underline T$ is nondecreasing and 1-Lipschitz.
	From \cite[Theorem 1.3]{BaBePa19} we have that
	\begin{equation}
		\label{eq:linear on irred comp}
		\pi^M_{\underline T(x)}\left( \left\{ y \in \R \colon \underline T(x) - x = \underline T(y) - y \right\} \right) = 1\quad \mu\text{-almost surely}.
	\end{equation}
	Since $\underline T$ is nondecreasing and 1-Lipschitz, the map $x \mapsto \underline T(x) - x$ is nonincreasing and
	\[ \left\{ y \in \R \colon \underline T(x) - x = \underline T(y) - y \right\} \]
	is a closed interval for every $x \in \R$.
	For $z,z'$ in the interior of $\underline T(\R)$ there are minimal $x,x' \in \R$ with $\underline T(x) = z$ and
	$\underline T(x') = z'$. Therefore, $\underline S(z) = x$, $\underline S(z') = x'$ (where $\underline S$ is as in Lemma \ref{lem:dual optimizers}), and
	\[	z < z' \implies z - \underline S(z) = \underline T(x) - x \geq \underline T(x') - x' = z' - \underline S(z'),	\]
	i.e. $z \mapsto z - \underline S(z)$ is nonincreasing, from which we deduce concavity of $\underline \psi$.
	Furthermore, $\underline \psi$ restricted to $\{y \in \R \colon \underline T(y) - y = \underline T(x) - x\}$ is linear,
	whereby we find by \eqref{eq:linear on irred comp} that $\mu$-almost surely 
	\[
		\int_\R \underline \psi(y) \, \pi^M_{\underline T(x)}(dy) = \underline \psi\left(b(\pi^M_{\underline T(x)})\right) = \underline \psi(\underline T(x)).
	\]
	Therefore, by Lemma \ref{lem:dual optimizers} \ref{it:T inc},
	\begin{equation} \label{eq:cons of linear}
		\int_\R \underline \psi(y) \, \pi^M_{\underline T(x)}(dy)  + R_C \underline \psi(x) = \theta(\underline T(x) - x).
	\end{equation}
	If $\theta(y - x) \leq f(x) + g(y)$ for $f \in L^1(\mu)$ and $g \in L^1(\nu)$, then the $\mu$-integral
	of $R_C\underline \psi$ and the $\nu$-integral of $\underline \psi$ are well-defined in $[-\infty,\infty)$.
	Due to concavity of $\underline \psi$ and as $\nu^\ast$ is in convex order smaller than $\nu$, we observe that the $\nu^\ast$-integral of $\underline\psi$ is well-defined in $[-\infty, \infty)$.
	The function $R_C \underline \psi$ is convex (as the infimum over a jointly
	convex function) yielding $R_C\underline \psi \in L^1(\mu)$. Therefore, by \eqref{eq:cons of linear}
	\begin{align*}
		\int_{\R \times\R} \theta(y - x) \, \pi^\ast(dx,dy) &= \int_\R \theta(\underline T(x) - x) \, \mu(dx) = \int_\R \underline \psi(\underline T(x)) + R_C\underline \psi(x) \, \mu(dx)
	\\	&= \int_\R \int_\R \psi(y) \, \pi^M_{\underline T(x)}(dy)  + R_C\underline \psi(x) \, \mu(dx) = \nu(\underline \psi) + \mu(R_C \underline \psi),
	\end{align*}
	hence $\underline\psi$ is a dual optimizer.

	To see the second assertion, note that the primal optimizer of the upper bound, \eqref{eq.sup}, is given
	by the anticomonotone coupling $\pi^{a} \in \Pi(\mu,\nu)$.
	We have
	\[	\pi_x^a\left([\overline T(x+),\overline T(x)]\right) = 1 \quad \mu\text{-almost surely.}	\]
	Recalling the definition of $\overline R_C\psi$ given in Corollary \ref{cor:dual optimizers}, we find
	$\overline \psi(y) + \overline R_C\psi(y) = \theta(y-x)$ $\pi^a$-almost surely.
	Moreover, $\overline \psi$ is convex on its domain, since $z \mapsto z - \overline S(z)$ (where $\overline S$ is as in Lemma \ref{lem:dual optimizers}) and
	$\partial_- \theta(z)$ are nondecreasing.
	If $\theta(y-x) \geq \tilde f(x) + \tilde g(y)$ for $\tilde f \in L^1(\mu)$ and $\tilde g \in L^1(\nu)$, then the integral of $\overline \psi$ w.r.t.\ $\nu$ and the integral of $\overline R_C\psi$
	w.r.t.\ $\mu$ are well-defined in $(-\infty, \infty]$.
	Therefore, $\overline \psi$ is a dual optimizer, since we find by Lemma \ref{lem:dual optimizers} \ref{it:T dec} that
	\[	\int_{\R \times \R} \theta(y-x) \, \pi^a(dx,dy) = \int_{\R \times \R} \overline \psi(y)
	+ \overline R_C \psi(x) \, \pi^a(dx,dy) = \nu(\overline \psi) + \mu(\overline R_C\psi).	\qedhere\]
\end{proof}

\begin{example}\label{exa.call}
	In the setting of Corollary \ref{cor:dual optimizers}, let $\theta$ be given by $y \mapsto y^+$, and assume that $\mu$ and $\nu$ are equivalent to the Lebesgue measure restricted to some interval $I$.
	Then the left derivative of $\theta$ satisfies
	\[	\partial_-\theta(y - x) = \begin{cases} 0 & y \leq x, \\ 1 & \text{else.}	\end{cases}	\]
	We have already seen in the proof of Corollary \ref{cor:dual optimizers} that 
	$y \mapsto y - \underline S(y)$ and $y \mapsto y - \overline S(y)$ are nonincreasing and nondecreasing,
	respectively. Therefore, there are uniquely determined points $\underline a,\overline a \in [-\infty,\infty]$
	such that
	\[
		y - \underline S(y) > 0 \quad \forall 	y < \underline a\quad\text{ and }\quad y - \overline S(y) > 0 \quad \forall y > \overline a.
	\]
	If $\underline a \in \R$, then
	\begin{align*}
		\underline \psi(y) = \int_{y_0}^y 1_{(-\infty,\underline a)}(z) \, dz = (\underline a - y_0)^+ - (\underline a - y)^+,
	\\	R_{C}(x) = \inf_{y \in \R} (\underline a - y)^+ + (y - x)^+ - (\underline a - y_0)^+ = (\underline a - x)^+ - (\underline a - y_0)^+,
	\end{align*}
	whereas if $\overline a \in \R$, then
	\begin{align*}
		\overline \psi(y) = \int_{y_0}^y 1_{(\overline a, \infty)}(z) \, dz = (y - \overline a)^+ - (y_0 - \overline a)^+,
	\\	\overline R_{C}(x) = \sup_{y \in \R} -(y - \overline a)^+ + (y_0 - \overline a)^+ + (y -x)^+ = -(x - \overline a)^+ + (y_0 - \overline a)^+.
	\end{align*}
	Note that the constants are canceling out.
	Therefore, removing the constants and differentiating yields the triplets
	\begin{align}\label{eq.trip.1}
		\left(\underline \phi(x_1),\underline \psi(y_2),\underline \Delta(y_1)\right) := \left( (\underline a - x_1)^+ , -(\underline a - y_2)^+, -\partial_-\theta(\underline a - y_1) \right),\\		
	\left(\overline \phi(x_1), \overline \psi(y_2), \overline \Delta(y_1)\right) := \Big( -(x_1 - \overline a)^+ , (y_2 - \overline a)^+, \partial_-\theta(y_1- \overline a) \Big),\label{eq.trip.2}
	\end{align}
	for $(x_1,y_1,y_2) \in \R^3$, which are optimizers of
	\begin{align*}
		\sup\left\{ \mu(\phi) + \nu(\psi) \colon \phi(x_1) + \psi(y_2) + \Delta(y_1)(y_2 - y_1) \leq (y_1 - x_1)^+\text{ for all }x_1,y_1,y_2 \right\},
	\\	\inf\left\{ \mu(\phi) + \nu(\psi) \colon \phi(x_1) + \psi(y_2) + \Delta(y_1)(y_2 - y_1) \geq (y_1 - x_1)^+\text{ for all }x_1,y_1,y_2 \right\},
	\end{align*}
	respectively.
\end{example}

\section*{Acknowledgments}
MB acknowledges support from FWF through grant no.\ Y00782.

\vspace{0.7cm}

\noindent No data was used for this research.

\bibliography{joint_biblio}
\bibliographystyle{apalike}
\end{document}